\documentclass[12pt, a4paper]{article}
\usepackage[margin=1in]{geometry} 

\usepackage[authoryear,longnamesfirst]{natbib}

\usepackage{graphicx}%
\usepackage{multirow}%
\usepackage{amsmath,amssymb,amsfonts}%
\usepackage{amsthm}%
\usepackage{mathrsfs}%
\usepackage[title]{appendix}%
\usepackage{xcolor}%
\usepackage{textcomp}%
\usepackage{manyfoot}%
\usepackage{booktabs}%
\usepackage{algorithmicx}%
\usepackage{algpseudocode}%
\usepackage{listings}%

\usepackage{subfigure}
\usepackage{empheq}
\usepackage{url}

\usepackage{authblk}

\newtheorem{theorem}{Theorem}
\newtheorem{proposition}[theorem]{Proposition}%
\newtheorem{remark}{Remark}%

\newtheorem{definition}{Definition}%

\newtheorem{lemma}[theorem]{Lemma}

\newtheorem{corollary}{Corollary}

\begin{document}



\title{Eventual convexity for separable chance constraints with skewed generalized hyperbolic random variables}  

\author[1]{Heng Zhang\thanks{Corresponding author: hengzhang@centralesupelec.fr}}
\author[1]{Abdel Lisser}

\affil[1]{Universit\'e Paris-Saclay, CNRS, CentraleSup\'elec, Laboratory of signals and systems, 3, Rue Joliot-Curie, 91192 Gif sur Yvette, France}

\date{} 

\maketitle

\begin{abstract}
Chance constraints are widely used in optimization under uncertainty. This paper aims to show the eventual convexity (EV) of chance constraints with skewed generalized hyperbolic (GH) random variables. We prove that the densities of GH distributions are $\alpha$-decreasing, and obtain exact convex reformulations for separable jointly chance constraints with GH distributions. We provide numerical results to compute the $\alpha$-decreasing threshold parameters and show the EV of the feasible set together with the computational tractability to solve the associated optimization problems.

\vspace{1em}
\noindent\textbf{Keywords:} Chance constraints, Skewed distributions, Generalized hyperbolic distributions, Eventual convexity
\end{abstract}

\section{Introduction}
We consider the following chance constrained optimization (CCO) problem:
\begin{equation}\label{CCP}
\min c^{\top}x \ \text{ s.t. } \ \mathbb{P}(\xi\le g(x))\ge p, \ x\in X,  
\end{equation}
where $X \subseteq \mathbb{R}^{N}(N \geq 1)$ is a closed convex set, $\xi=(\xi_1,\ldots,\xi_m)^\top \in \mathbb{R}^m$ is an $m$-dimensional random vector, and $g(x)=(g_1(x),\ldots,g_m(x))^\top : X \to \mathbb{R}^m$ is a deterministic vector-valued mapping.

Chance constraints, initially introduced by Charnes and Cooper \cite{charnes1963deterministic} and Charnes et al. \cite{charnes1958cost}, were used to describe the probability of events occurring beyond a given probability value. Prékopa \cite{prekopa1995stochastic, prekopa2003probabilistic} considered the general chance constraint
\begin{equation}\label{Introduction 1}
\mathbb{P}\left( h(x, \xi) \leq 0 \right) \geq p,
\end{equation}
where $p \in [0,1]$ is the confidence level, $\xi\in \mathbb{R}^{m}$ is a random vector, $x \in \mathbb{R}^{n}$ is the decision vector, and $h: \mathbb{R}^{n} \times \mathbb{R}^{m} \to \mathbb{R}^{k}$ is a vector-valued mapping. A classical result of Prékopa \cite[Theorem 10.2.1]{prekopa1995stochastic} stated that if the density of $\xi$ is log-concave and the components of $-h$ are quasi-concave, then the feasible set of \eqref{Introduction 1} is convex for all $p \in [0,1]$. However, the function $-h$ can not always be quasi-concave. A counterexample in \cite{kataoka1963stochastic} showed that when $h(x, \xi)= x^{\top} \xi$ and $\xi$ subject to a multivariate Gaussian distribution, then the probability threshold $p = 1/2$ acts as a sharp boundary for the convexity. This motivates the notion of \emph{eventual convexity}, which refers to convexity of the feasible set for sufficiently large probability levels \cite{henrion2008convexity,henrion2011convexity}. For separable jointly chance constraints,
\begin{equation}\label{Introduction 2}
\mathbb{P}\left( \xi \leq g(x) \right) \geq p,
\end{equation}
Henrion and Strugarek \cite{henrion2008convexity} proposed a useful criterion: if $g$ is $-r$-concave and the distribution function of $\xi$ has an $(r+1)$-decreasing density, then the feasible set of \eqref{Introduction 2} is eventually convex. 
While eventual convexity is well-studied for symmetric elliptical distributions \cite{cheng2014second,minoux2016convexity,nguyen2023convexity,zhang2025convexity}, many financial and actuarial applications exhibit skewness and heavy tails \cite{davies2009fund,lane2000pricing}, often modeled via the normal mean-variance mixture (NMVM) or generalized hyperbolic (GH) distributions \cite{birge2021portfolio, wang2022portfolio}. Recently, chance constraints with skewed distributions have gained attention \cite{ke2015novel,lasserre2021distributionally,nguyen2024random,peng2021chance}. In particular, Liu et al. \cite{liu2022chance} proposed algorithmic approximations for GH chance constraints. However, the theoretical eventual convexity of GH chance constraints remains unexplored. The main difficulty lies in the complicated density structure of skewed distributions, such as the intricate form of Bessel functions embedded within the definition of GH distributions.

In this paper, we investigate the eventual convexity of separable jointly chance constraints with skewed GH random variables. Our main theoretical results rely specifically on the assumption of separable chance constraints. Our first contribution is to establish the $\alpha$-decreasing property of GH densities by deriving suitable estimates for the modified Bessel functions. Based on this result, we obtain eventual convexity for separable jointly chance constraints with GH distributions. We also provide numerical experiments to compute the threshold parameters associated with the $\alpha$-decreasing property and to illustrate the eventual convexity of the feasible set.

The rest of this paper is organized as follows. Section \ref{Preliminaries} collects the necessary preliminaries on skewed distributions. Section \ref{section about alpha decreasing propoerty} proves the $\alpha$-decreasing property of GH distributions. Section \ref{subsection about eventual convexity for separable} establishes the eventual convexity result and the convex reformulations for separable jointly chance constraints with GH distributions. Section \ref{section about numerical experiments} presents numerical experiments on the threshold parameters, the eventual convexity of the feasible sets, and the solutions of the associated optimization problems.

\section{Fundamentals of skewed distributions}\label{Preliminaries}

Normal variance mixtures provide a tractable extension of Gaussian distributions and form the basis of the generalized hyperbolic family. Throughout this section, we assume that $\Sigma$ is positive definite and that the mixing variable $W$ has no atom at zero.


\begin{definition}\label{definition of normal mean variance mixture distribution}
A random vector $\varsigma$ follows a (multivariate) \textbf{NMVM} distribution if
\begin{equation}\label{normal mean variance mixture distribution}
\varsigma \overset{d}{=} m(W)+\sqrt{W}\,AZ,
\end{equation}
where $Z \sim \mathcal{N}_{N}(0,I_{N})$, $W\ge 0$ is a scalar random variable independent of $Z$, $A\in \mathbb{R}^{N\times N}$, and $m:[0,\infty)\to\mathbb{R}^{N}$ is measurable. 

The NMVM model allows the mean vector to depend on $W$, which introduces asymmetry into the random vectors.
\end{definition}

\begin{definition}\label{definition of Generalized inverse Gaussian distributions}
The random variable $Y$ has a \textbf{generalized inverse Gaussian (GIG)} distribution, i.e., $Y\sim Gig(\lambda,\chi,\psi)$, if its density is
$
f_{Y}(t) = \frac{\chi^{-\lambda}\left( \sqrt{ \chi \psi} \right)^{\lambda}}{2 K_{\lambda} \left( \sqrt{\chi \psi} \right)} t^{\lambda - 1} \exp \left( -\frac{1}{2}\left( \chi t^{-1} + \psi t \right) \right)$ for $t > 0$, where $K_{\lambda}$ is the modified Bessel function of the third kind and the parameters satisfy
\begin{subequations}\label{eq:all_conditions}
\begin{empheq}[left=\empheqlbrace]{align}
    \chi > 0, \psi \geq 0, &\quad \text{if } \lambda < 0, \label{eq:cond_a} \\         
    \chi > 0, \psi > 0, &\quad \text{if } \lambda = 0, \label{eq:cond_b} \\
    \chi \geq 0, \psi > 0, &\quad \text{if } \lambda > 0. \label{eq:cond_c}
\end{empheq}
\end{subequations}
In particular, when $\psi = 0$, GIG density contains the \textbf{inverse gamma (IG)} density as a special limiting case. The random variable $Y$ follows an IG distribution, i.e., $Y \sim Ig(\alpha, \beta)$, if its density is $f_Y(t)=\frac{\beta^\alpha}{\Gamma(\alpha)}\,t^{-(\alpha+1)}e^{-\beta/t}$ for $t>0,\ \alpha>0,\ \beta>0$.
Assume $\lambda < 0$ and $\psi = 0$. Then, the random variable $Y \sim Gig(\lambda, \chi, 0)$ is equivalent to $Y \sim Ig(-\lambda, \frac{1}{2} \chi)$, which means the density of $Y$ can be written as 
$
f_Y(t)=\left(\frac{1}{2}\chi\right)^{-\lambda}(\Gamma(-\lambda))^{-1}\,t^{\lambda-1}\exp\!\left(-\frac{1}{2}\chi\,t^{-1}\right).
$
Likewise, when $\chi=0$ and $\lambda>0$, the GIG distribution reduces in the limiting sense to a gamma distribution $Ga(\lambda,\psi/2)$, with density
$
f_Y(t)=\frac{(\psi/2)^\lambda}{\Gamma(\lambda)},t^{\lambda-1}e^{-\psi t/2}$ for $t>0$.
\end{definition}

\begin{definition}\label{definition of Generalized hyperbolic distributions}
A random vector $\varsigma$ follows a (multivariate) \textbf{generalized hyperbolic} (\textbf{GH}) distribution if \eqref{normal mean variance mixture distribution} holds with $W\sim Gig(\lambda,\chi,\psi)$ and $m(W)=\mu+W\gamma$, where $\mu,\gamma\in\mathbb{R}^{N}$. We denote it briefly by $\varsigma \sim GH_{N}(\lambda,\chi,\psi,\mu,\Sigma,\gamma)$.
\end{definition}

The GH family encompasses a rich class of distributions, accommodating both skewness and heavy tails. Notable special cases \cite{jiang2023review,mcneil2005quantitative} parameterized by $(\lambda, \chi, \psi, \gamma)$ include the Hyperbolic $(\lambda=(N+1)/2)$, Normal Inverse Gaussian $(\lambda=-1/2)$, Variance Gamma $(\chi=0)$, and generalized Skewed Student's t $(\psi=0)$. When $\gamma=0$, it reduces to symmetric elliptical distributions (e.g., Gaussian, Student's t, Laplace), which have been extensively studied in previous eventual convexity literature. 

Let $Z \sim GH_{N}(\lambda,\chi,\psi,\mu,\Sigma,\gamma)$, and let $h(\omega)$ denote the density of $W$. Then, the density of $Z$ admits the mixture representation
\begin{equation}\label{density of GH_d random vector with W density form}
\begin{split}
&f_{Z}(t)=\int_{0}^{+\infty}
\frac{\exp\!\left((t-\mu)^{\top}\Sigma^{-1}\gamma\right)}
{(2\pi)^{N/2}|\Sigma|^{1/2}\omega^{N/2}} \cdot\\
&\exp\!\left\{-\frac{(t-\mu)^{\top}\Sigma^{-1}(t-\mu)}{2\omega}
-\frac{\gamma^{\top}\Sigma^{-1}\gamma}{2/\omega}\right\}
h(\omega)\,d\omega.
\end{split}
\end{equation}
Evaluating \eqref{density of GH_d random vector with W density form} yields the closed form
\begin{equation}\label{non-integral form _ density of GH_d random vector with W density form}
f_{Z}(t)=
c\,
\frac{
K_{\lambda-(N/2)}\!\left(S(t)
\right)
\exp\!\left((t-\mu)^{\top}\Sigma^{-1}\gamma\right)
}{
\left(
S(t)
\right)^{(N/2)-\lambda}
},
\end{equation}
where $S(t) := \sqrt{
\left(\chi+(t-\mu)^{\top}\Sigma^{-1}(t-\mu)\right)
\left(\psi+\gamma^{\top}\Sigma^{-1}\gamma\right)
}$,
\[
c=
\frac{
\left(\sqrt{\chi\psi}\right)^{-\lambda}\psi^{\lambda}
\left(\psi+\gamma^{\top}\Sigma^{-1}\gamma\right)^{(N/2)-\lambda}
}{
(2\pi)^{N/2}|\Sigma|^{1/2}K_{\lambda}\!\left(\sqrt{\chi\psi}\right)
}.
\]
It is important to note that when either $\chi = 0$ or $\psi = 0$ (i.e., $\chi\psi = 0$), the explicit formula for $f_Z(t)$ and its normalizing constant $c$ in \eqref{non-integral form _ density of GH_d random vector with W density form} contains $K_\lambda(0)$, which diverges. In such cases, the density equation is strictly understood in a limiting sense as $\chi\psi \to 0^+$. Thanks to the asymptotic approximation of the Bessel function for small arguments (i.e., $K_\lambda(x) \simeq \Gamma(|\lambda|) 2^{|\lambda|-1} x^{-|\lambda|}$ as $x \to 0^+$), this limit is well-defined and yields the specific densities for the subclasses of GH distributions (e.g., Variance-Gamma and skewed Student's t). An explicit derivation of this limiting density for the case $\psi=0$ is provided in Corollary \ref{density of special GH_1}.



\begin{proposition}\cite[Proposition 3.13.]{mcneil2005quantitative}\label{scalarization of generalized hyperbolic distributions}
If $Y \sim GH_{N}(\lambda,\chi,\psi,\mu,\Sigma,\gamma)$ and $Z = BY + b$, where $B \in \mathbb{R}^{k\times N}$ and $b \in \mathbb{R}^{k}$, then $Z \sim GH_{k}(\lambda,\chi,\psi,\; B\mu+b,\; B\Sigma B^{\top},\; B\gamma)$.
\end{proposition}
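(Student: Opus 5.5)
We prove the statement directly from the stochastic representation in Definition \ref{definition of Generalized hyperbolic distributions}. We do not work with the closed-form density \eqref{non-integral form _ density of GH_d random vector with W density form}. The idea is that an affine map of a normal mean-variance mixture is again a normal mean-variance mixture driven by the \emph{same} mixing variable $W$. The Gaussian part then transforms in the usual way.

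First, write $Y \overset{d}{=} \mu + W\gamma + \sqrt{W}AZ$, where $Z\sim\mathcal{N}_N(0,I_N)$ is independent of $W\sim Gig(\lambda,\chi,\psi)$ and $AA^\top=\Sigma$. Applying the affine map gives
\[
Z' := BY+b \overset{d}{=} (B\mu+b) + W(B\gamma) + \sqrt{W}\,BAZ.
\]
Equality in distribution is preserved because $y\mapsto By+b$ is measurable.

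Second, we identify the Gaussian part. The vector $BAZ$ is centred Gaussian in $\mathbb{R}^k$ with covariance $BAA^\top B^\top = B\Sigma B^\top$. Hence there is a matrix $A'\in\mathbb{R}^{k\times k}$ with $A'A'^\top=B\Sigma B^\top$ (for instance its symmetric square root) and a standard normal $Z''\sim\mathcal{N}_k(0,I_k)$ such that $BAZ\overset{d}{=}A'Z''$. We must also keep $Z''$ independent of $W$. Since $W$ and $Z$ are independent, the conditional law of $Z'$ given $W=w$ is $\mathcal{N}_k\big(B\mu+b+wB\gamma,\; wB\Sigma B^\top\big)$. This is exactly the conditional law of $(B\mu+b)+W B\gamma+\sqrt{W}A'Z''$ given $W=w$, with $Z''$ independent of $W$. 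Mixing over the common law of $W$ then yields equality of the joint laws.

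Third, we read off the result. This is the representation \eqref{normal mean variance mixture distribution} with $m(W)=(B\mu+b)+W(B\gamma)$, mixing variable $W\sim Gig(\lambda,\chi,\psi)$ and dispersion $B\Sigma B^\top$. So $Z'\sim GH_k(\lambda,\chi,\psi,B\mu+b,B\Sigma B^\top,B\gamma)$.

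The only genuinely delicate point is the standing assumption in this section that the dispersion matrix is positive definite. For that, $B$ needs rank $k$; otherwise the law is a degenerate GH distribution without a density. I would state that restriction explicitly, noting that the scalar projections used later have $B=v^\top$ with $v\neq0$, so $v^\top\Sigma v>0$. An alternative check is via characteristic functions. The GH characteristic function is
\[
\mathbb{E}\big[e^{iu^\top\mu}\, e^{W(iu^\top\gamma-\frac12 u^\top\Sigma u)}\big],
\]
and substituting $u=B^\top s$ gives the same identification. I would use this as a cross-check rather than as the main route.
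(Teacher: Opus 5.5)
Your argument is correct. The paper gives no proof of its own here; it only cites \cite[Proposition 3.13]{mcneil2005quantitative}. The argument there is essentially the characteristic-function computation you relegate to a cross-check: $\phi_{BY+b}(s)=e^{is^\top b}\phi_Y(B^\top s)=\mathbb{E}\bigl[e^{is^\top(B\mu+b)}e^{W(is^\top B\gamma-\frac12 s^\top B\Sigma B^\top s)}\bigr]$, followed by uniqueness of characteristic functions.

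Your main route is genuinely different because it stays at the level of the stochastic representation. You keep the same mixing variable $W$ and identify the conditional law $\mathcal{N}_k(B\mu+b+wB\gamma,\,wB\Sigma B^\top)$. You then rebuild a representation with a $k\times k$ factor $A'$ and a fresh $Z''\sim\mathcal{N}_k(0,I_k)$ independent of $W$.

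\emph{What each route buys.} Your route gives an explicit coupling, which makes it transparent why $(\lambda,\chi,\psi)$ are unchanged. It also produces a representation in exactly the format of the paper's Definition, which insists on a square $A$. With a rectangular factor you could simply keep $BA$ and the original $Z$, and the construction of $A'$ and $Z''$ would be unnecessary. The characteristic-function route is shorter and avoids that construction and the independence bookkeeping. Both routes rest on the same fact: the law of $m(W)+\sqrt{W}AZ$ depends on $A$ only through $AA^\top$. Both also go through verbatim when $B\Sigma B^\top$ is singular.

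\emph{On the rank condition.} Your observation that $B$ must have rank $k$ is not needed for the distributional identity itself. It is, however, a correct point the paper glosses over. It is what makes the conclusion compatible with this section's standing assumption of a positive definite dispersion matrix and with the density formula. As you note, it holds in Proposition \ref{1-dimensional generalized hyperbolic distributions}, where $B=x^\top/\sqrt{x^\top\Sigma x}$ with $x\neq 0$.
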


\begin{proposition}\label{1-dimensional generalized hyperbolic distributions}
Let $x \in \mathbb{R}^{N}\setminus\{0\}$. Suppose that $\xi \sim GH_{N}(\lambda,\chi,\psi,\mu,\Sigma,\gamma)$ and define $\zeta(x):=(\xi^{\top}x-\mu^{\top}x)/\sqrt{x^{\top}\Sigma x}$. Then, $\zeta(x) \sim GH_{1}\!(\lambda,\chi,\psi,0,1,\frac{x^{\top}\gamma}{\sqrt{x^{\top}\Sigma x}})$.
\end{proposition}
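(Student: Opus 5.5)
The plan is to apply Proposition \ref{scalarization of generalized hyperbolic distributions} twice, once with a projection and once with an affine rescaling. Both are affine maps, so the GIG parameters $(\lambda,\chi,\psi)$ never change; only the location, dispersion and skewness parameters need to be tracked.

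First, fix $x\in\mathbb{R}^N\setminus\{0\}$ and take $k=1$, $B=x^{\top}\in\mathbb{R}^{1\times N}$, $b=-\mu^{\top}x$. The proposition gives
\[
\xi^{\top}x-\mu^{\top}x \sim GH_{1}\bigl(\lambda,\chi,\psi,\; x^{\top}\mu-\mu^{\top}x,\; x^{\top}\Sigma x,\; x^{\top}\gamma\bigr)=GH_{1}\bigl(\lambda,\chi,\psi,0,x^{\top}\Sigma x,x^{\top}\gamma\bigr).
\]
Second, because $\Sigma$ is positive definite and $x\neq 0$, we have $s:=\sqrt{x^{\top}\Sigma x}>0$. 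This makes the scalar $B=1/s$ (viewed as a $1\times1$ matrix) with $b=0$ well defined. Applying the proposition again to this one-dimensional variable gives location $0$, dispersion $s^{-2}\,x^{\top}\Sigma x=1$, and skewness $x^{\top}\gamma/s$, which is exactly the claimed law of $\zeta(x)$.

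Alternatively, the two steps can be done at once. Take $B=x^{\top}/s$ and $b=-\mu^{\top}x/s$ in Proposition \ref{scalarization of generalized hyperbolic distributions}. Then $B\Sigma B^{\top}=1$, $B\mu+b=0$ and $B\gamma=x^{\top}\gamma/s$.

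There is no real obstacle. The only points to check are that $x\neq 0$ together with positive definiteness of $\Sigma$ ensures $s>0$, and that the resulting $1\times1$ dispersion "matrix" is positive definite, so the GH$_1$ law is well defined under the standing assumptions of this section. If the proposition is used in the limiting cases $\chi\psi=0$, the statement still holds, because the affine-map result is proved at the level of the mixture representation \eqref{normal mean variance mixture distribution} and does not depend on the closed-form density. Concretely, $x^{\top}\xi\overset{d}{=}x^{\top}\mu+W x^{\top}\gamma+\sqrt{W}\,x^{\top}AZ$, and $x^{\top}AZ\sim\mathcal N(0,x^{\top}\Sigma x)$ with $\Sigma=AA^{\top}$.
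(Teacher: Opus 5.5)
Your proposal is correct, and your one-step variant is exactly the paper's proof: apply Proposition~\ref{scalarization of generalized hyperbolic distributions} with $B=x^{\top}/\sqrt{x^{\top}\Sigma x}$ and $b=-x^{\top}\mu/\sqrt{x^{\top}\Sigma x}$. The two-step version and your check that $\sqrt{x^{\top}\Sigma x}>0$ are harmless additions that the paper leaves implicit.
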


\begin{proof}
Set $B:=x^{\top}/\sqrt{x^{\top}\Sigma x}$ and $
b:=-x^{\top}\mu/\sqrt{x^{\top}\Sigma x}$. The result follows immediately from Proposition \ref{scalarization of generalized hyperbolic distributions}.
\end{proof}

The following corollary gives a one-dimensional GH density in the case $\psi=0$. Given that $\psi=0$ is admissible exclusively when $\lambda<0$ and $\chi>0$, we restrict our attention to case \eqref{eq:cond_a}.

\begin{corollary}\label{density of special GH_1}
Assume that $\lambda<0$ and $\chi>0$. If $\xi \sim GH_{1}(\lambda,\chi,0,0,1,0)$, then the density of $\xi$ is given by $f_{\xi}(t)=\hat{c}\,(\chi+t^{2})^{\lambda-\frac12}$, where $\hat{c}:=\frac{\Gamma\!\left(-\lambda+\frac12\right)}{\Gamma(-\lambda)}\frac{1}{\sqrt{\pi}}\chi^{-\lambda}$.
\end{corollary}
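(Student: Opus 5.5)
The most direct route avoids the Bessel closed form altogether and works from the mixture representation \eqref{density of GH_d random vector with W density form}. With $N=1$, $\mu=0$, $\Sigma=1$, $\gamma=0$ and $\psi=0$, Definition \ref{definition of Generalized inverse Gaussian distributions} tells us that $W\sim Ig(-\lambda,\chi/2)$, whose density is $h(\omega)=(\chi/2)^{-\lambda}\Gamma(-\lambda)^{-1}\omega^{\lambda-1}e^{-\chi/(2\omega)}$. Substituting this into the mixture integral gives
\[
f_\xi(t)=\frac{(\chi/2)^{-\lambda}}{\sqrt{2\pi}\,\Gamma(-\lambda)}\int_0^{\infty}\omega^{\lambda-\frac32}\exp\!\left(-\frac{\chi+t^2}{2\omega}\right)d\omega .
\]

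Next, change variables with $s=(\chi+t^2)/(2\omega)$, so that $\omega=(\chi+t^2)/(2s)$ and $d\omega=-(\chi+t^2)/(2s^2)\,ds$. The integral then becomes
\[
\left(\tfrac{\chi+t^2}{2}\right)^{\lambda-\frac12}\int_0^\infty s^{-\lambda-\frac12}e^{-s}\,ds=\left(\tfrac{\chi+t^2}{2}\right)^{\lambda-\frac12}\Gamma\!\left(-\lambda+\tfrac12\right).
\]
This step needs $-\lambda+\tfrac12>0$, which holds because $\lambda<0$.

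The remaining work is collecting constants. We have $(\chi/2)^{-\lambda}\,2^{-\lambda+\frac12}/\sqrt{2\pi}=\chi^{-\lambda}/\sqrt{\pi}$, since the powers of $2$ combine as $2^{\lambda}\cdot2^{-\lambda+\frac12}\cdot2^{-\frac12}=1$. This yields $f_\xi(t)=\hat c\,(\chi+t^2)^{\lambda-\frac12}$ with $\hat c$ exactly as stated.

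As a consistency check, one can recover the same formula as the limit $\psi\to0^+$ of \eqref{non-integral form _ density of GH_d random vector with W density form}. Here $S(t)\to0$, and one applies $K_\nu(x)\simeq\Gamma(|\nu|)2^{|\nu|-1}x^{-|\nu|}$ both to $K_{\lambda-1/2}(S(t))$ and to $K_\lambda(\sqrt{\chi\psi})$. The powers of $\psi$ cancel, and $|\lambda-\tfrac12|=-\lambda+\tfrac12$ produces the same exponent. The main obstacle is bookkeeping of the powers of $2$ and $\chi$. Beyond that, one only needs to justify that the mixture integral converges, which follows from the Gamma integral above, and that the limiting interpretation agrees with the direct mixture computation. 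The direct route sidesteps that second issue, so I would present it as the proof.
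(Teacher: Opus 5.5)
Your proof is correct, and it takes a different route from the paper.

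\textbf{The paper's route.} The paper never evaluates the mixture integral. It first obtains $s^{\lambda}/K_{\lambda}(s)\to 2^{\lambda+1}/\Gamma(-\lambda)$ as $s\to0^{+}$ by matching the normalizing constant of $Gig(\lambda,\chi,\psi)$ with that of $Ig(-\lambda,\chi/2)$. It then lets $\psi\to0^{+}$ in the Bessel closed form of the GH density. That is essentially the computation you relegate to a consistency check. It also needs the small-argument asymptotics of $K_{\lambda-1/2}(S(t))$, since $S(t)\to0$ as well.

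\textbf{Your route.} You work from the definition itself. With $\psi=0$ the mixing law is exactly $Ig(-\lambda,\chi/2)$, so the density is the mixture integral. Everything then reduces to a single Gamma integral, which is valid because $\chi+t^{2}>0$ and $-\lambda+\tfrac12>0$. Your bookkeeping of the powers of $2$ and $\chi$ is right.

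\textbf{Comparison.} Your argument is more self-contained and more rigorous. The paper's limiting argument tacitly treats the density of the $\psi=0$ law as the pointwise limit of the $\psi>0$ densities and does not justify this; doing so would require, e.g., dominated convergence inside the mixture integral. You need no Bessel asymptotics and no interchange of limits. The paper's route does show explicitly that the $\psi=0$ formula is the continuous extension of the general closed form, which supports its remark that the Bessel-form density is read in a limiting sense when $\chi\psi=0$. Your consistency check recovers exactly that link. So presenting the direct computation as the proof, with the limit as a remark, is the better arrangement.
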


\begin{proof}
Since $Gig(\lambda,\chi,0)$ is equivalent to $Ig(-\lambda,\chi/2)$, by Definition \ref{definition of Generalized inverse Gaussian distributions}, we obtain
\begin{equation}\label{limitation of coefficient}
\lim_{s \to 0^{+}} \frac{s^{\lambda}}{K_{\lambda}(s)}=\frac{2^{\lambda+1}}{\Gamma(-\lambda)}.
\end{equation}
The claim follows by taking the limit $\psi\to 0^{+}$ in the one-dimensional GH density \eqref{non-integral form _ density of GH_d random vector with W density form} and using \eqref{limitation of coefficient}.
\end{proof}

\section{$\alpha$-decreasing property of skewed GH distributions}\label{section about alpha decreasing propoerty}
In this section, we derive the $\alpha$-decreasing of GH densities, where the $\alpha$-decreasing property is defined as follows.
\begin{definition}\label{def of r-decreasing}
A function $f : \mathbb{R} \rightarrow \mathbb{R}$ is \textbf{$\alpha$-decreasing} for some $\alpha \in \mathbb{R}$ if $f$ is continuous on $(0, +\infty)$ and there exists some $t^{*}(\alpha) > 0$ such that the mapping $t \mapsto t^{\alpha} f(t)$ is strictly decreasing for all $t > t^{*}(\alpha)$.
\end{definition}
The density of GH distribution is defined by the modified Bessel functions of the third kind, which is a kind of extreme non-linear function \cite{arfken2013mathematical, gil2002evaluation}. Thus, it is more difficult to prove the $\alpha$-decreasing property than the cases with Gaussian, t-Student, and other elliptical distributions \cite[Proposition 4.1]{henrion2008convexity}, \cite[Proposition 5]{cheng2014second}. 
At the beginning, we collect some properties about the modified Bessel functions of the third kind in the following. 
\begin{proposition}\cite{spanier1987atlas, gil2002evaluation}\label{The properties of modified Bessel functions of the third kind}
    Let $x > 0$ and $\nu \in \mathbb{R}$. The modified Bessel
    function of the third kind $K_\nu(x)$ satisfies: (1) $K_{\nu}(x) > 0$; (2) $K_{\nu}(x)$ is decreasing; (3) $\lim_{x \to +\infty} K_{\nu}(x) = 0$; (4) $K_{\nu}(x) = K_{-\nu}(x)$; (5) $K_{\nu}(x)$ increases as $| \nu |$ increasing; (6) For large $x \gg \mu := \frac{\nu^{2}}{2} - \frac{1}{8}$, $K_{\nu}(x) \simeq \sqrt{\frac{\pi}{2x}} \cdot exp(-x) \cdot \left( 1 + \frac{1}{x} \right)^{\mu}$; (7) $K^{'}_{\nu}(x) = \frac{\nu}{x} \cdot K_{\nu}(x) - K_{\nu + 1}(x)$; (8) For $x \to 0^{+}$, $K_{\nu}(x) \simeq \Gamma(\nu) \cdot 2^{\nu - 1} \cdot x^{-\nu}$ if $\nu > 0$, $K_{\nu}(x) \simeq \Gamma(-\nu) \cdot 2^{-\nu - 1} \cdot x^{\nu}$ if $\nu < 0$, and $K_{\nu}(x) \simeq -\ln (x) + \ln 2 - \hat{\gamma}$ if $\nu = 0$ (with Euler's constant $\hat{\gamma} \approx 0.5772$).
\end{proposition}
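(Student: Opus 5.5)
We would derive all eight items from the integral representation
\[
K_\nu(x)=\int_0^{\infty} e^{-x\cosh s}\cosh(\nu s)\,ds,\qquad x>0,\ \nu\in\mathbb{R},
\]
together with its equivalent form, obtained by the substitution $u=\tfrac{x}{2}e^{s}$ (after first writing the integral over $\mathbb{R}$ with $e^{\nu s}$ in place of $\cosh(\nu s)$),
\[
K_\nu(x)=\tfrac12\left(\tfrac{x}{2}\right)^{\nu}\int_0^\infty u^{-\nu-1}\exp\!\left(-u-\tfrac{x^2}{4u}\right)du .
\]
Items (1)--(5) then follow directly from properties of the integrand.

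\textbf{Items (1)--(5).} The integrand is strictly positive, which gives (1). Differentiating under the integral sign gives
\[
K_\nu'(x)=-\int_0^\infty \cosh s\, e^{-x\cosh s}\cosh(\nu s)\,ds<0,
\]
which gives (2). The differentiation is justified by domination on $[x_0,\infty)$ for any $x_0>0$, since $\cosh(\nu s)\le e^{|\nu|s}$ is beaten by $e^{-x_0\cosh s}$. The same bound and monotone convergence yield (3). Evenness of $\cosh$ gives (4). Since $\cosh(\nu s)$ is increasing in $|\nu|$ for each fixed $s\ge 0$, we obtain (5).

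\textbf{Item (7).} We use the identities $\cosh((\nu\pm1)s)=\cosh(\nu s)\cosh s\pm\sinh(\nu s)\sinh s$. Integrating by parts,
\[
\int_0^\infty e^{-x\cosh s}\sinh s\,\sinh(\nu s)\,ds=\frac{\nu}{x}K_\nu(x),
\]
because the boundary terms vanish: $\sinh(0)=0$ at $s=0$, and there is exponential decay at infinity. Combining the two identities gives
\[
K_\nu'(x)=-\tfrac12\bigl(K_{\nu+1}(x)+K_{\nu-1}(x)\bigr),\qquad K_{\nu+1}(x)-K_{\nu-1}(x)=\frac{2\nu}{x}K_\nu(x).
\]
Eliminating $K_{\nu-1}$ between these two relations gives exactly $K_\nu'=\frac{\nu}{x}K_\nu-K_{\nu+1}$.

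\textbf{Item (8).} For $\nu>0$ we use the second representation. As $x\to0^+$, the integral converges by monotone convergence to $\int_0^\infty u^{-\nu-1}e^{-u}\,du$. This is not finite as written, so we substitute instead $u\mapsto \tfrac{x^2}{4v}$, which gives
\[
K_\nu(x)=\tfrac12\left(\tfrac{x}{2}\right)^{-\nu}\int_0^\infty v^{\nu-1}\exp\!\left(-v-\tfrac{x^2}{4v}\right)dv .
\]
Monotone convergence then gives $K_\nu(x)\sim \tfrac12\Gamma(\nu)(x/2)^{-\nu}=\Gamma(\nu)2^{\nu-1}x^{-\nu}$. The case $\nu<0$ follows from (4). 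For $\nu=0$ we split $\int_0^\infty e^{-x\cosh s}\,ds$ at $s=\operatorname{arccosh}(1/x)\approx\ln(2/x)$. The piece below this point contributes $-\ln x+\ln 2+O(1)$, and identifying the constant $-\hat\gamma$ requires the finer computation $\int_0^\infty v^{-1}\bigl(e^{-v-x^2/(4v)}-\dots\bigr)\,dv$, i.e. the standard expansion of $E_1$. We expect this constant to be the delicate part of (8), and would take it from the series $K_0(x)=-(\ln(x/2)+\hat\gamma)I_0(x)+O(x^2)$.

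\textbf{Item (6), the main obstacle.} We set $\cosh s=1+t$, so that $ds=dt/\sqrt{t(t+2)}$, and obtain
\[
K_\nu(x)=e^{-x}\int_0^\infty e^{-xt}\,\frac{\cosh(\nu\operatorname{arccosh}(1+t))}{\sqrt{t(t+2)}}\,dt .
\]
We then apply Watson's lemma to the expansion $t^{-1/2}2^{-1/2}\bigl(1+\tfrac{4\nu^2-1}{8}t+O(t^2)\bigr)$. This gives
\[
K_\nu(x)=\sqrt{\tfrac{\pi}{2x}}\,e^{-x}\Bigl(1+\tfrac{4\nu^2-1}{8x}+O(x^{-2})\Bigr).
\]
Since $\mu=\frac{\nu^2}{2}-\frac18=\frac{4\nu^2-1}{8}$, the factor $(1+1/x)^{\mu}=1+\mu/x+O(x^{-2})$ agrees with this expansion to first order. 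We would therefore read (6) as an asymptotic equivalence that is accurate up to $O(x^{-2})$ relative error when $x\gg\mu$. Making precise the sense of ``$\simeq$'' here is the main point needing care, and for the later $\alpha$-decreasing arguments we would only use the leading-order consequences.
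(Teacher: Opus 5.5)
The paper gives no proof of this proposition; it imports all eight items from \cite{spanier1987atlas, gil2002evaluation}. Your self-contained derivation from $K_\nu(x)=\int_0^\infty e^{-x\cosh s}\cosh(\nu s)\,ds$ is therefore a genuinely different route.

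Items (1)--(5) and (7) are correct; the integration by parts and the elimination of $K_{\nu-1}$ check out. Your argument for (5) also gives \emph{strict} monotonicity in $|\nu|$. The paper silently needs this when it asserts in Lemma~\ref{preparation result of alpha-decreasing 1} that $K_{\lambda-1/2}<K_{\lambda+1/2}$ iff $|\lambda-1/2|<|\lambda+1/2|$.

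Compared with the bare citation, your route mainly clarifies what ``$\simeq$'' in (6) means. You show that $(1+1/x)^{\mu}$ reproduces the Hankel expansion $1+\frac{4\nu^2-1}{8x}+O(x^{-2})$ only to first order. That first-order precision is exactly what the paper uses later. Your closing remark that only leading-order consequences are needed downstream is therefore wrong for this paper. The limit $t(1-J^2(t))\to 2\lambda/\Lambda$ in Lemma~\ref{preparation result of alpha-decreasing 2} comes entirely from the $1/x$ term. So does the threshold $\alpha<1-\lambda$ in the case $\psi=0$, $\varphi>0$ of Theorem~\ref{alpha-decreasing of density function}. The $O(x^{-2})$ relative accuracy must be kept.

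There are two computational points to fix.

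\textbf{Item (6).} Your expansion of the integrand is off by a factor of two. We have $\cosh(\nu\operatorname{arccosh}(1+t))=1+\nu^2t+O(t^2)$ and $(t(t+2))^{-1/2}=(2t)^{-1/2}\bigl(1-\frac t4+O(t^2)\bigr)$. The integrand is therefore $(2t)^{-1/2}\bigl(1+\frac{4\nu^2-1}{4}t+O(t^2)\bigr)$, not $\frac{4\nu^2-1}{8}t$ in the bracket. Watson's lemma then supplies the factor $\Gamma(3/2)/\Gamma(1/2)=\frac12$, which yields $\frac{4\nu^2-1}{8x}$. With the coefficient you wrote you would get $\frac{4\nu^2-1}{16x}$, and the claimed agreement with $(1+1/x)^{\mu}$ would fail. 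A quick check is $\nu=3/2$, where the integrand is exactly $(2t)^{-1/2}(1+2t)$.

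\textbf{Item (8) with $\nu=0$.} Splitting at $\operatorname{arccosh}(1/x)$ does not isolate the constant. A clean self-contained route is
$K_0(x)=\int_1^\infty e^{-xu}(u^2-1)^{-1/2}\,du=E_1(x)+\int_1^\infty e^{-xu}\bigl((u^2-1)^{-1/2}-u^{-1}\bigr)du$.
The second integrand is nonnegative, so by monotone convergence the integral tends to $\bigl[\operatorname{arccosh}u-\ln u\bigr]_1^\infty=\ln 2$. Combined with $E_1(x)=-\ln x-\hat{\gamma}+O(x)$, this gives the stated constant.

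A minor point: the substitution $u=\frac x2 e^{s}$ applied to $e^{\nu s}$ directly produces the $(x/2)^{-\nu}u^{\nu-1}$ form, not the $(x/2)^{\nu}u^{-\nu-1}$ form. This is harmless by the symmetry $s\mapsto -s$.
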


Suppose $\xi \sim GH_{1}(\lambda,\chi,\psi,0,1,\varphi)$. For the sake of simplicity, we make the following definitions throughout this paper:
\begin{equation}\label{auxiliary values 1}
\eta(t) := \sqrt{\chi+t^{2}}, \ \ \rho(t) := e^{\varphi t}, \ \ \Lambda := \sqrt{\psi+\varphi^{2}}.
\end{equation}
Direct computation yields $\eta'(t) = t/\sqrt{\chi+t^{2}}$ and $ \rho'(t) = \varphi \rho(t)$.

\begin{lemma}\label{preparation result of alpha-decreasing 1}
Assume $\xi \sim GH_{1}(\lambda,\chi,\psi,0,1,\varphi)$ and $\Lambda > 0$. Let $J$ be defined on $(0,+\infty)$ by
\begin{equation}\label{the division about Bessel functions}
J(t):=\frac{K_{\lambda-\frac12}(\Lambda\,\eta(t))}{K_{\lambda+\frac12}(\Lambda\,\eta(t))}.
\end{equation}
Then, there exists $t_{0}>0$ such that
\begin{equation}\label{boundary of J}
    0<J(t)\le \sup_{t>t_{0}}J(t)=:c_{0}(t_{0})<+\infty,\qquad \forall t>t_{0}.
\end{equation}
Moreover, for any $t>0$, the following holds:
\begin{equation}\label{the boundary of c_0}
\left\{
        \begin{array}{ll}
        0 < J(t) < 1, & \text{if} \ \ \lambda > 0,\\            
        J(t) \equiv 1, & \text{if} \ \ \lambda = 0,\\
        J(t) > 1, & \text{if} \ \ \lambda < 0,
        \end{array}
\right.
\end{equation}
and
\begin{equation}\label{limitation of J}
    J(t)\simeq \left(\frac{1}{1+\frac{1}{\Lambda\,\eta(t)}}\right)^{\lambda}\to 1
\qquad \text{as } t\to+\infty.
\end{equation}
\end{lemma}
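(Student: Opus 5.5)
The proof reduces to properties of the single-variable function $x\mapsto K_{\lambda-\frac12}(x)/K_{\lambda+\frac12}(x)$ on $x>0$, composed with $x=\Lambda\eta(t)$. Since $\Lambda>0$ and $\eta(t)=\sqrt{\chi+t^2}>0$ for $t>0$ (even when $\chi=0$), the argument $\Lambda\eta(t)$ is strictly positive. By Proposition \ref{The properties of modified Bessel functions of the third kind}(1), both Bessel values are positive, so $J(t)>0$ and $J$ is continuous on $(0,+\infty)$.

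For the trichotomy \eqref{the boundary of c_0}, I will use property (4) to write $K_{\lambda-\frac12}=K_{|\lambda-\frac12|}$ and $K_{\lambda+\frac12}=K_{|\lambda+\frac12|}$, and then compare the orders. If $\lambda>0$, then $|\lambda-\frac12|<\lambda+\frac12=|\lambda+\frac12|$. If $\lambda<0$, the inequality reverses. If $\lambda=0$, the two orders are equal, so $J\equiv1$. Property (5) gives the comparison, but it is stated only as ``increasing''. For the strict inequalities I will use the integral representation $K_\nu(x)=\int_0^\infty e^{-x\cosh s}\cosh(\nu s)\,ds$, in which $\cosh(\nu s)$ is strictly increasing in $|\nu|$ for every $s>0$. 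This yields strict monotonicity in $|\nu|$, and hence $0<J<1$ for $\lambda>0$ and $J>1$ for $\lambda<0$.

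For the asymptotics \eqref{limitation of J}, apply property (6) to numerator and denominator with $x=\Lambda\eta(t)\to+\infty$ as $t\to\infty$. The factors $\sqrt{\pi/(2x)}e^{-x}$ cancel, leaving $(1+1/x)^{\mu_--\mu_+}$. Here $\mu_\pm=\frac{(\lambda\pm\frac12)^2}{2}-\frac18$, so $\mu_--\mu_+=\frac12\big[(\lambda-\tfrac12)^2-(\lambda+\tfrac12)^2\big]=-\lambda$. This gives exactly $\big(1/(1+\frac{1}{\Lambda\eta(t)})\big)^{\lambda}$, which tends to $1$. Making this rigorous requires the standard fact that the ratio of each $K_\nu(x)$ to $\sqrt{\pi/(2x)}e^{-x}$ tends to $1$, so that $J(t)\to1$ holds as a true limit and not just as an approximation.

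With the limit established, \eqref{boundary of J} follows at once. Since $J(t)\to1$, there is $t_0>0$ with $J(t)\le2$ for all $t>t_0$, so $c_0(t_0)=\sup_{t>t_0}J(t)\le2<\infty$, and positivity was shown above. In fact continuity together with the finite limit gives boundedness on any $(t_0,\infty)$ with $t_0>0$, since $J$ extends continuously to $t_0$ because $\Lambda\eta(t_0)>0$. The main obstacle is rigor in the asymptotic step. Property (6) is stated only heuristically (``$\simeq$'' for ``$x\gg\mu$''), so I will make explicit that it means asymptotic equivalence. The strict version of (5) needs the same care, and I will handle it through the integral representation as above.
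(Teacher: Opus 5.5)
Your proposal is correct and follows essentially the same route as the paper: positivity from property (1), the trichotomy from properties (4)--(5) by comparing $|\lambda-\frac12|$ with $|\lambda+\frac12|$, and the limit and boundedness from the large-argument asymptotics (6) composed with $\Lambda\eta(t)\to+\infty$. Two steps go beyond what the paper writes but do not change the approach. The paper uses the strict inequality from property (5) without justifying it, and you justify it through the integral representation $K_\nu(x)=\int_0^\infty e^{-x\cosh s}\cosh(\nu s)\,ds$. You also carry out explicitly the exponent computation $\mu_- - \mu_+ = -\lambda$, which the paper only states.
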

\begin{proof}
Applying properties (1) and (6) of Proposition \ref{The properties of modified Bessel functions of the third kind}, we obtain
\begin{equation}\label{the property of the division about Bessel functions}
\begin{split}
&0 < \frac{K_{\lambda - (1/2)}(t)}{K_{\lambda + (1/2)}(t)} \simeq  \left( \frac{1}{1 + \frac{1}{t}} \right)^{\lambda} \to 1 \quad \text{as} \ \ t \to +\infty.
\end{split}
\end{equation}
From \eqref{auxiliary values 1}, $\Lambda \eta(t)$ is strictly increasing for $t>0$ with $\lim_{t \to +\infty} \Lambda \eta(t) = +\infty$. Thus, \eqref{boundary of J} and \eqref{limitation of J} follow immediately from \eqref{the property of the division about Bessel functions}. Using properties (4) and (5) of Proposition \ref{The properties of modified Bessel functions of the third kind}, we note that $K_{\lambda - (1/2)} < K_{\lambda + (1/2)}$ if and only if $|\lambda - 1/2| < |\lambda + 1/2|$. Evaluating these absolute values yields \eqref{the boundary of c_0}, which completes the proof.
\end{proof}

\begin{lemma}\label{preparation result of alpha-decreasing 2}
Under the assumptions of Lemma \ref{preparation result of alpha-decreasing 1}, assume $\Lambda > 0$. Then, the function $J$, defined by  \eqref{the division about Bessel functions}, satisfies $\lim_{t \to +\infty} t  \left( 1 - J^{2}(t) \right) = 2\lambda/\Lambda$.
\end{lemma}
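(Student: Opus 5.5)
Set $x:=\Lambda\,\eta(t)$. Since $\eta(t)=\sqrt{\chi+t^{2}}$, we have $x\to+\infty$ as $t\to+\infty$, and $t/x\to 1/\Lambda$ because $\eta(t)/t\to 1$. Hence
\[
t\bigl(1-J^{2}(t)\bigr)=\frac{t}{x}\cdot x\bigl(1-R(x)^{2}\bigr),\qquad R(x):=\frac{K_{\lambda-\frac12}(x)}{K_{\lambda+\frac12}(x)},
\]
and it suffices to show that $x(1-R(x)^{2})\to 2\lambda$ as $x\to+\infty$. Since $R(x)\to 1$ by \eqref{limitation of J} (equivalently \eqref{the property of the division about Bessel functions}), we can factor $1-R^{2}=(1-R)(1+R)$. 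The problem then reduces to proving $x(1-R(x))\to\lambda$.

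The crude asymptotic in property (6) of Proposition \ref{The properties of modified Bessel functions of the third kind} would give $R(x)\simeq(1+1/x)^{-\lambda}\approx 1-\lambda/x$. But a ratio of two ``$\simeq$'' relations does not control the $1/x$ term rigorously, so I will not rely on it alone. I will instead use the standard Hankel expansion
\[
K_\nu(x)=\sqrt{\frac{\pi}{2x}}\,e^{-x}\Bigl(1+\frac{4\nu^{2}-1}{8x}+O(x^{-2})\Bigr),
\]
which is the precise content behind property (6): its first-order coefficient $\frac{4\nu^2-1}{8}=\frac{\nu^2}{2}-\frac18=\mu$ matches. With $\nu_\pm=\lambda\pm\frac12$ we get $4\nu_\pm^{2}-1=4\lambda^{2}\pm4\lambda$. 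Therefore
\[
R(x)=\frac{1+\frac{\lambda^{2}-\lambda}{2x}+O(x^{-2})}{1+\frac{\lambda^{2}+\lambda}{2x}+O(x^{-2})}=1-\frac{\lambda}{x}+O(x^{-2}),
\]
so $x(1-R)\to\lambda$ and $x(1-R^{2})\to 2\lambda$.

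The only delicate step is justifying the $O(x^{-2})$ remainder, that is, that property (6) holds to first order with an error $o(1/x)$ rather than merely as a leading-order equivalence. I would cite the classical asymptotic expansion of $K_\nu$, valid uniformly for fixed $\nu$. As a cross-check that avoids expansions, I could use property (7) together with the recurrence $K_{\nu+1}(x)-K_{\nu-1}(x)=\frac{2\nu}{x}K_\nu(x)$. With $\nu=\lambda+\frac12$ this gives $\frac{K_{\lambda+3/2}}{K_{\lambda+1/2}}=\frac{1}{R}+\frac{2\lambda+1}{x}$. This Riccati-type relation confirms the coefficient $-\lambda$ once we know $x(1-R)$ has a limit. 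Multiplying the limits $t/x\to1/\Lambda$, $1+R\to2$ and $x(1-R)\to\lambda$ yields $2\lambda/\Lambda$, as claimed.
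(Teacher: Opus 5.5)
Your proof is correct and follows essentially the same route as the paper: both use the two-term Hankel expansion of $K_{\lambda\pm 1/2}$ to get $J = 1-\lambda/x+O(x^{-2})$ with $x=\Lambda\eta(t)\sim\Lambda t$, and your factoring $1-R^2=(1-R)(1+R)$ instead of squaring $J$ is only a cosmetic difference. One minor slip in your optional cross-check: the recurrence with $\nu=\lambda+\frac12$ gives $K_{\lambda+3/2}/K_{\lambda+1/2}=R+\frac{2\lambda+1}{x}$, not $\frac{1}{R}+\frac{2\lambda+1}{x}$, but the main argument does not depend on it.
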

\begin{proof}
    If $\lambda = 0$, $J(t) \equiv 1$ trivially by property (4) of Proposition \ref{The properties of modified Bessel functions of the third kind}. For $\lambda \neq 0$, we use
    the two-term asymptotic expansion
    $K_\nu(x) = \sqrt{\frac{\pi}{2x}} e^{-x} ( 1 + \frac{4\nu^2 - 1}{8x} + O(x^{-2}) )$
    as $x \to \infty$ \cite[Page 503]{spanier1987atlas}. Setting $x = \Lambda \eta(t)$ and applying this to
    $K_{\lambda - 1/2}(x)$ and $K_{\lambda + 1/2}(x)$, their ratio yields
    $J(t) = 1 - \frac{\lambda}{x} + O(x^{-2})$. Squaring $J(t)$ gives
    $1 - J^2(t) = \frac{2\lambda}{x} + O(x^{-2})$. Since
    $x = \Lambda\eta(t) \sim \Lambda t$ as $t \to \infty$, it directly follows that
    $\lim_{t\to\infty} t(1 - J^2(t)) = \frac{2\lambda}{\Lambda}$.
\end{proof}

The following theorem provides the main result of this section about the $\alpha$-decreasing property of GH densities. In contrast to the classical elliptical distributions, it is more challenging to discuss the $\alpha$-decreasing property of skewed distributions due to the complexity of the Bessel functions. A set of estimates relevant to the modified Bessel function of the third kind plays a crucial role in proving the following theorem.
\begin{theorem}\label{alpha-decreasing of density function}
Let $\xi \sim GH_{1}(\lambda,\chi,\psi,0,1,\varphi)$. Then, we have the following assertions:
\begin{itemize}
    \item If $\psi>0$, then for any $\alpha\in\mathbb{R}$ there exists $t^{*}(\alpha)>0$ such that the density function $f_{\xi}$ is $\alpha$-decreasing on $(t^{*}(\alpha),+\infty)$.
    \item Suppose $\psi=0$ and $\varphi\neq 0$. If $\varphi<0$, then $f_{\xi}$ is $\alpha$-decreasing for any $\alpha\in\mathbb{R}$. If $\varphi>0$, then $f_{\xi}$ is $\alpha$-decreasing for any $\alpha<1-\lambda$.
    \item Suppose $\psi=0$ and $\varphi=0$. Then $f_{\xi}$ is $\alpha$-decreasing if and only if $\lambda<0$ and $\alpha<1-2\lambda$.
\end{itemize}
\end{theorem}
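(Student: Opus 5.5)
The plan is to work with the logarithmic derivative of $t\mapsto t^{\alpha}f_{\xi}(t)$ on $(0,+\infty)$. Since $f_\xi>0$, it suffices to show that this derivative is eventually strictly negative. By \eqref{non-integral form _ density of GH_d random vector with W density form} with $N=1$, $\mu=0$, $\Sigma=1$, $\gamma=\varphi$, and the notation \eqref{auxiliary values 1}, we have, when $\Lambda>0$ and $\psi>0$,
\[
f_\xi(t)=c\,\rho(t)\,K_{\lambda-\frac12}(\Lambda\eta(t))\,(\Lambda\eta(t))^{\lambda-\frac12}.
\]
When $\psi=0$, the same expression holds with $c$ replaced by its limit from \eqref{limitation of coefficient}; the constant plays no role. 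Set $x=\Lambda\eta(t)$ and apply property (7) of Proposition \ref{The properties of modified Bessel functions of the third kind} with $\nu=\lambda-\frac12$:
\[
\frac{d}{dx}\log\!\big(K_{\lambda-\frac12}(x)x^{\lambda-\frac12}\big)=\frac{2\lambda-1}{x}-\frac{1}{J}.
\]
Here $J$ is the ratio \eqref{the division about Bessel functions}. This gives the key identity
\[
D(t):=\frac{d}{dt}\log\big(t^{\alpha}f_\xi(t)\big)=\frac{\alpha}{t}+\varphi+\eta'(t)\Big(\frac{2\lambda-1}{\eta(t)}-\frac{\Lambda}{J(t)}\Big).
\]
Everything then reduces to the asymptotics of $D(t)$ as $t\to+\infty$, using $\eta'(t)\to1$, $\eta(t)\sim t$, and Lemmas \ref{preparation result of alpha-decreasing 1} and \ref{preparation result of alpha-decreasing 2}.

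\emph{Case $\psi>0$.} Here $\Lambda>\lvert\varphi\rvert$. By \eqref{limitation of J}, $J(t)\to1$, so $D(t)\to\varphi-\sqrt{\psi+\varphi^2}<0$ for every fixed $\alpha$, because the term $\alpha/t$ vanishes. Hence there is $t^*(\alpha)$ with $D<0$ on $(t^*(\alpha),\infty)$, and $t^\alpha f_\xi$ is strictly decreasing there.

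\emph{Case $\psi=0$, $\varphi\neq0$.} Here $\Lambda=\lvert\varphi\rvert>0$, so both lemmas apply.
\begin{itemize}
\item If $\varphi<0$, the same limit argument gives $D(t)\to2\varphi<0$.
\item If $\varphi>0$, the zeroth-order terms cancel, since $\varphi-\Lambda=0$, and a first-order expansion is needed. Lemma \ref{preparation result of alpha-decreasing 2} gives $1-J^2(t)=\frac{2\lambda}{\Lambda t}+o(1/t)$, so $\frac1J=1+\frac{\lambda}{\Lambda t}+o(1/t)$. Combined with $\eta'(t)=1+O(t^{-2})$ and $\frac{1}{\eta(t)}=\frac1t+O(t^{-3})$, this yields
\[
D(t)=\frac{\alpha+(2\lambda-1)-\lambda}{t}+o(1/t)=\frac{\alpha+\lambda-1}{t}+o(1/t).
\]
This is eventually negative whenever $\alpha<1-\lambda$.
\end{itemize}

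\emph{Case $\psi=\varphi=0$.} Now $\Lambda=0$ and the Bessel lemmas do not apply. Instead use Corollary \ref{density of special GH_1}, recalling that admissibility of $\psi=0$ forces $\lambda<0$ and $\chi>0$. Then $f_\xi(t)=\hat c(\chi+t^2)^{\lambda-\frac12}$, and
\[
D(t)=\frac{\alpha(\chi+t^2)+(2\lambda-1)t^2}{t(\chi+t^2)}.
\]
The sign of $D$ for large $t$ is governed by $(\alpha+2\lambda-1)t^2+\alpha\chi$.
\begin{itemize}
\item If $\alpha<1-2\lambda$, this is eventually negative.
\item If $\alpha=1-2\lambda$, the numerator equals $\alpha\chi>0$, since $\alpha>1$.
\item If $\alpha>1-2\lambda$, the numerator is eventually positive.
\end{itemize}
In the last two cases $t^\alpha f_\xi$ is eventually increasing, which gives the ``only if'' direction. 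The condition $\lambda<0$ is automatic from \eqref{eq:cond_a}.

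The main obstacle is the subcase $\psi=0$, $\varphi>0$. There the leading terms cancel exactly, so the conclusion depends on a precise $1/t$ correction to $1/J$. One must make sure the remainder from the Bessel asymptotic expansion in Lemma \ref{preparation result of alpha-decreasing 2} is genuinely $o(1/t)$ in $t$, rather than merely a formal $\simeq$. I would derive $\frac1J-1\sim\frac{\lambda}{\Lambda t}$ directly from the limit in Lemma \ref{preparation result of alpha-decreasing 2}, using $1-J^2=(1-J)(1+J)$ and $J\to1$. This avoids relying on the cruder approximation \eqref{limitation of J}.
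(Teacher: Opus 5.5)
Your proposal is correct and follows essentially the same route as the paper: your logarithmic derivative satisfies $tJ(t)D(t)=\mathrm{LHS}(t)-Q(t)$ for the paper's reduced inequality, and both arguments use the recurrence $K_\nu'=\frac{\nu}{x}K_\nu-K_{\nu+1}$ followed by the same case analysis via the two Bessel-ratio lemmas and the $\psi=0$ corollary. You are slightly more careful than the paper in two places, which it only sketches: the explicit $o(1/t)$ expansion of $1/J$ in the subcase $\psi=0$, $\varphi>0$, and the boundary value $\alpha=1-2\lambda$ in the ``only if'' direction.
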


\begin{proof}
    To prove that $f_{\xi}$ is $\alpha$-decreasing, it suffices to show that there exists $t^{*}(\alpha)>0$ such that \begin{equation}\label{alpha_decreasing proof} t f'_{\xi}(t)+\alpha f_{\xi}(t)<0,\qquad \forall t>t^{*}(\alpha). \end{equation}
    We first consider the case $\Lambda = \sqrt{\psi+\varphi^{2}}>0$. By substituting \eqref{auxiliary values 1} into \eqref{non-integral form _ density of GH_d random vector with W density form} and applying Proposition \ref{The properties of modified Bessel functions of the third kind} (7), the condition $tf_\xi'(t) + \alpha f_\xi(t) < 0$ is equivalent to
    \begin{equation}\label{alpha_decreasing proof-simplification-5}
    \begin{aligned}
    & J(t) \left( 2 \lambda - 1 \right) + J(t) \alpha - J(t) \frac{\chi  \left( 2 \lambda -1 \right)}{\chi + t^{2}} +  \frac{\chi \sqrt{\psi + \varphi^{2}}}{\sqrt{ \chi + t^{2}}}\\
    & < \sqrt{\psi + \varphi^{2}}  \left( \sqrt{\chi + t^{2}} - t \right) + t \left( \sqrt{\psi + \varphi^{2}} - J(t) \varphi \right)\\
    &=: Q(t).
    \end{aligned}
    \end{equation}
    where $J(t)$ is defined in \eqref{the division about Bessel functions}. Using Lemma \ref{preparation result of alpha-decreasing 1}, the left-hand side of \eqref{alpha_decreasing proof-simplification-5} tends to $2 \lambda - 1 + \alpha$ as $t \to +\infty$. 

    Next, we evaluate the right-hand side $Q(t)$ using Lemma \ref{preparation result of alpha-decreasing 2} and the limits established in Lemma \ref{preparation result of alpha-decreasing 1}: 
    \begin{itemize}
        \item If $\psi > 0$, algebraic expansion and limit analysis yield $\lim_{t\to+\infty} Q(t) = +\infty$. Thus, inequality \eqref{alpha_decreasing proof-simplification-5} holds for sufficiently large $t$ for any $\alpha \in \mathbb{R}$.
        \item If $\psi = 0$ and $\varphi > 0$, we have $\lim_{t\to+\infty} Q(t) = \lambda$. For \eqref{alpha_decreasing proof-simplification-5} to eventually hold, we require $2\lambda - 1 + \alpha < \lambda$, which yields $\alpha < 1 - \lambda$.
        \item If $\psi = 0$ and $\varphi < 0$, reformulating $Q(t)$ similarly yields $\lim_{t\to+\infty} Q(t) = +\infty$, implying the condition holds for any $\alpha \in \mathbb{R}$.
    \end{itemize}

    Finally, we consider the case with $\psi=0$ and $\varphi=0$. Using Corollary \ref{density of special GH_1}, the density simplifies to $f_\xi(t) = \hat{c}(\chi + t^2)^{\lambda - 1/2}$. Direct differentiation reduces the condition $tf_\xi'(t) + \alpha f_\xi(t) < 0$ to a simple quadratic inequality: $(2\lambda - 1 + \alpha)t^2 + \alpha\chi < 0$. For this to hold for sufficiently large $t$, the leading coefficient must be strictly negative, which gives $\alpha < 1 - 2\lambda$. This completes the proof.
\end{proof}


\section{Eventual convexity for separable chance constraints with skewed GH distributions}\label{subsection about eventual convexity for separable}
In this section, we investigate the eventual convexity of separable chance constraints subject to generalized hyperbolic distributed perturbations. Consider the separable chance-constrained model
\[
M(p)=\{x\in X:\mathbb{P}(\xi\le g(x))\ge p\},
\]
where $X \subset \mathbb{R}^{N}$ is a non-empty and convex set. In the case of independent components, convexity follows from Theorem 3.1 in \cite{henrion2008convexity} if the marginal densities satisfy the required $\alpha$-decreasing property. Theorem \ref{alpha-decreasing of density function} yields explicit parameter conditions guaranteeing this property.

\begin{definition}
A function $f : \Omega \rightarrow [0, +\infty)$ on a set $\Omega \subset \mathbb{R}^{N}$ is called \textbf{$\alpha$-concave} for some $\alpha \in [-\infty, +\infty]$ if and only if for all $x, y \in \Omega$, $\lambda \in [0, 1]$ the following inequality holds:
\begin{equation*}
f(\lambda x + (1 - \lambda) y) \geq m_{\alpha} ( f(x), f(y), \lambda ),
\end{equation*}
where $m_{\alpha} : [0, +\infty) \times [0, +\infty) \times [0, 1] \rightarrow \mathbb{R}$ is defined as follows:
$
m_{\alpha}(a, b, \lambda) = 0 \ \ \text{if} \ \ ab = 0,$
and if $a > 0$, $b > 0$, $0 \leq \lambda \leq 1$, then
\begin{equation*}
m_{\alpha}(a, b, \lambda) =
\left\{
        \begin{array}{ll}
        a^{\lambda}b^{1-\lambda} & \text{if} \ \ \alpha = 0,\\
        \text{max}\lbrace a, b \rbrace & \text{if} \ \ \alpha = +\infty,\\
        \text{min}\lbrace a, b \rbrace & \text{if} \ \ \alpha = -\infty \\
        (\lambda a^{\alpha} + (1 - \lambda) b^{\alpha})^{1/\alpha} & otherwise.
        \end{array}
\right.
\end{equation*}
\end{definition}

\begin{theorem}\label{thm:eventual_convexity_standardized}
Let $\xi_i \sim GH_1(\lambda_i,\chi_i,\psi_i,0,1,\varphi_i),\  i=1,\dots,m$. Suppose that $\{\xi_i\}_{i=1}^m$ are mutually independent. Let $r_i>0$ $(i = 1, ..., m)$. Assume that, for each $i$, the parameters of $\xi_i$ satisfy one of the following conditions:
\begin{itemize}
    \item $\psi_i>0$;
    \item $\psi_i=0$ and $\varphi_i<0$;
    \item $\psi_i=0$, $\varphi_i>0$, and $r_i<-\lambda_i$;
    \item $\psi_i=0$, $\varphi_i=0$, $\lambda_i<0$, and $r_i<-2\lambda_i$.
\end{itemize}
Suppose that each $g_i$ is $(-r_i)$-concave on a convex set $X \subset \mathbb{R}^{N}$. Then the feasible set $M(p):=\left\{x\in X:\prod_{i=1}^m \mathbb{P}(\xi_i\le g_i(x))\ge p\right\}$ is convex for all $p>\max_{1\le i\le m} F_{\xi_i}(t_i^*)$, where $t_i^*$ denotes the threshold associated with the $(r_i+1)$-decreasing density of $\xi_i$.
\end{theorem}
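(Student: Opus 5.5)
The plan is to reduce the statement to the Henrion--Strugarek criterion (Theorem 3.1 in \cite{henrion2008convexity}). That result says: if the components $\xi_i$ are independent, each marginal has an $(r_i+1)$-decreasing density with threshold $t_i^*>0$, and each $g_i$ is $(-r_i)$-concave on the convex set $X$, then $M(p)$ is convex for all $p>\max_i F_{\xi_i}(t_i^*)$. So the only work is to check that each of the four parameter regimes listed in the statement makes the density $f_{\xi_i}$ $(r_i+1)$-decreasing. This follows from Theorem \ref{alpha-decreasing of density function} with $\alpha=r_i+1$.

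The four cases are checked one at a time.
\begin{itemize}
\item If $\psi_i>0$, the first assertion of Theorem \ref{alpha-decreasing of density function} gives $\alpha$-decreasingness for every $\alpha\in\mathbb{R}$, in particular for $\alpha=r_i+1$.
\item If $\psi_i=0$ and $\varphi_i<0$, the second assertion again gives it for every $\alpha$.
\item If $\psi_i=0$ and $\varphi_i>0$, the second assertion requires $\alpha<1-\lambda_i$, that is, $r_i+1<1-\lambda_i$, which is exactly $r_i<-\lambda_i$. Because $r_i>0$, this forces $\lambda_i<0$, consistent with $\psi_i=0$ being admissible only for $\lambda_i<0$.
\item If $\psi_i=\varphi_i=0$, the third assertion requires $\lambda_i<0$ and $r_i+1<1-2\lambda_i$, that is, $r_i<-2\lambda_i$.
\end{itemize}
In each case we obtain a threshold $t_i^*=t_i^*(r_i+1)>0$. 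The density is continuous on $(0,\infty)$ since $K_\nu$ is continuous and positive there (Proposition \ref{The properties of modified Bessel functions of the third kind}), which verifies Definition \ref{def of r-decreasing}.

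Next, independence lets the joint distribution function factor as $\mathbb{P}(\xi\le g(x))=\prod_i F_{\xi_i}(g_i(x))$. This matches the form of $M(p)$ in the statement, so all hypotheses of Theorem 3.1 of \cite{henrion2008convexity} hold, and the conclusion follows.

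The delicate point is not the case analysis but the compatibility of definitions with the cited criterion. Henrion--Strugarek need $t_i^*>0$ and use $p>F_{\xi_i}(t_i^*)$ to force $g_i(x)>t_i^*$ on $M(p)$. Recall why: $F_{\xi_i}(g_i(x))\ge\prod_j F_{\xi_j}(g_j(x))\ge p>F_{\xi_i}(t_i^*)$, and $F_{\xi_i}$ is nondecreasing. Convexity then follows because $F_{\xi_i}\circ g_i$ is log-concave there, by composing the $(-r_i)$-concave $g_i$ with a distribution function that is $r_i$-revealed-concave beyond $t_i^*$. If one wants to spell this out rather than cite it, the key step to redo is showing that $t\mapsto F_{\xi_i}(t^{-1/r_i})$ is concave on $(0,(t_i^*)^{-r_i})$. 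This comes from differentiating and using that $t^{r_i+1}f_{\xi_i}(t)$ is decreasing for $t>t_i^*$.
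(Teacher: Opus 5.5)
Your proposal is correct and follows the paper's proof: for each of the four parameter regimes you apply Theorem \ref{alpha-decreasing of density function} with $\alpha=r_i+1$, correctly translating $\alpha<1-\lambda_i$ and $\alpha<1-2\lambda_i$ into $r_i<-\lambda_i$ and $r_i<-2\lambda_i$, to get $(r_i+1)$-decreasing marginal densities, and then you invoke Theorem 3.1 of \cite{henrion2008convexity}. Your additional sketch of why that criterion holds, namely concavity of $z\mapsto F_{\xi_i}(z^{-1/r_i})$ on $(0,(t_i^*)^{-r_i})$, is an accurate summary of the cited argument, which the paper simply cites rather than reproduces.
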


\begin{proof}
Under the stated parameter conditions, Theorem \ref{alpha-decreasing of density function} implies that each density of $\xi_i$ is $(r_i+1)$-decreasing. Thus, the results follow directly from Theorem 3.1 in \cite{henrion2008convexity}.
\end{proof}

\begin{corollary}\label{cor:eventual_convexity_general}
Let $\xi_i \sim GH_1(\lambda_i,\chi_i,\psi_i,\mu_i,\sigma_i^2,\varphi_i)$ with $\sigma_i>0$ $(i=1,\dots,m)$. Suppose that $\{\xi_i\}_{i=1}^m$ are mutually independent. Define $\zeta_i:=(\xi_i-\mu_i)/\sigma_i$ and $\tilde g_i(x):=(g_i(x)-\mu_i)/\sigma_i$. Assume that each $\tilde g_i$ is $(-r_i)$-concave on a convex set $X \subset \mathbb{R}^{N}$, and that each $\zeta_i$ satisfies the parameter conditions in Theorem \ref{thm:eventual_convexity_standardized} with the same $r_i$. Then, $
M(p):=\left\{x\in X:\prod_{i=1}^m \mathbb{P}(\xi_i\le g_i(x))\ge p\right\}
$
is convex for all $p>\max_{1\le i\le m} F_{\zeta_i}(t_i^*)$.
\end{corollary}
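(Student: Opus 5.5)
The plan is to reduce Corollary \ref{cor:eventual_convexity_general} to Theorem \ref{thm:eventual_convexity_standardized} by standardizing each component.

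\emph{Step 1: the standardized variables.} Apply Proposition \ref{scalarization of generalized hyperbolic distributions} with $N=k=1$, $B=1/\sigma_i$ and $b=-\mu_i/\sigma_i$. This gives
\[
\zeta_i=(\xi_i-\mu_i)/\sigma_i\sim GH_1(\lambda_i,\chi_i,\psi_i,0,1,\varphi_i/\sigma_i).
\]
Here we use that the scalar $\Sigma=\sigma_i^2$ becomes $\sigma_i^2/\sigma_i^2=1$. We should note that the skewness parameter becomes $\varphi_i/\sigma_i$, which has the same sign as $\varphi_i$. The hypotheses are imposed directly on $\zeta_i$ anyway, so this causes no difficulty.

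\emph{Step 2: independence.} Each $\zeta_i$ is a measurable (affine) function of $\xi_i$ alone. Hence $\{\zeta_i\}_{i=1}^m$ are mutually independent.

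\emph{Step 3: rewriting the feasible set.} Since $\sigma_i>0$, the map $s\mapsto(s-\mu_i)/\sigma_i$ is strictly increasing. Therefore the events coincide,
\[
\{\xi_i\le g_i(x)\}=\{\zeta_i\le \tilde g_i(x)\},
\]
and so $\mathbb{P}(\xi_i\le g_i(x))=\mathbb{P}(\zeta_i\le\tilde g_i(x))=F_{\zeta_i}(\tilde g_i(x))$. Consequently
\[
M(p)=\Big\{x\in X:\prod_{i=1}^m \mathbb{P}(\zeta_i\le \tilde g_i(x))\ge p\Big\}.
\]

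\emph{Step 4: applying the theorem.} $M(p)$ is now exactly the set in Theorem \ref{thm:eventual_convexity_standardized}, with $\xi_i$ replaced by $\zeta_i$ and $g_i$ replaced by $\tilde g_i$. The hypotheses of that theorem hold:
\begin{itemize}
\item the $\zeta_i$ are independent, by Step 2;
\item each $\zeta_i$ has the standardized GH form required there, by Step 1;
\item the parameter conditions hold with the given $r_i$, by assumption;
\item each $\tilde g_i$ is $(-r_i)$-concave on the convex set $X$, by assumption.
\end{itemize}
The theorem then yields convexity of $M(p)$ for all $p>\max_i F_{\zeta_i}(t_i^*)$. Here $t_i^*$ is the $(r_i+1)$-decreasing threshold of the density of $\zeta_i$, which exists by Theorem \ref{alpha-decreasing of density function}.

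There is no real obstacle in this argument. The only point needing care is Step 1: the standardization must preserve the GH class, and the resulting skewness parameter must be the one to which the parameter conditions are applied. The corollary sidesteps this by stating those conditions directly for $\zeta_i$.
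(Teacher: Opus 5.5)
Your proposal is correct and follows essentially the same route as the paper. Both use the strictly increasing affine standardization to rewrite $\mathbb{P}(\xi_i\le g_i(x))=\mathbb{P}(\zeta_i\le \tilde g_i(x))$ and then invoke Theorem~\ref{thm:eventual_convexity_standardized}. Your Steps 1 and 2 make explicit what the paper leaves under ``by construction'': via Proposition~\ref{scalarization of generalized hyperbolic distributions}, $\zeta_i\sim GH_1(\lambda_i,\chi_i,\psi_i,0,1,\varphi_i/\sigma_i)$, and the $\zeta_i$ are mutually independent.
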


\begin{proof}
By construction, we have $\mathbb{P}(\xi_i\le g_i(x))=\mathbb{P}(\zeta_i\le \tilde g_i(x))$ $(i=1,\dots,m)$. Thus, the assertions follow immediately from Theorem \ref{thm:eventual_convexity_standardized}.
\end{proof}

\begin{remark}
    Assuming $\mu=0$ and $\sigma=1$ in Theorem 6 incurs no loss of generality. For a general variable $\zeta \sim GH_1(\lambda, \chi, \psi, \mu, \sigma^2, \varphi)$, the affine transformation $\xi = (\zeta - \mu)/\sigma$ introduces terms that are asymptotically dominated by the linear term $\sigma s$ for large $t$. Specifically, applying the chain rule yields $t f_\xi'(t) + \alpha f_\xi(t) = \frac{1}{\sigma^2} \Bigl[ (\sigma s + \mu) f_\zeta'(s) + \alpha \sigma f_\zeta(s) \Bigr]$, where $s:=(t-\mu)/\sigma$.
    In fact, as $t \to \infty$ (which implies $s \to \infty$), the linear term $\sigma s$ asymptotically dominates the constant $\mu$. Therefore, for sufficiently large $t$, the sign of the entire expression is determined by $\sigma \bigl[s f_\zeta'(s) + \alpha f_\zeta(s)\bigr]$.
    Furthermore, compared to existing results focusing on symmetric elliptical distributions $(\gamma = 0)$ \cite{cheng2014second,henrion2008convexity,minoux2016convexity}, Theorem \ref{thm:eventual_convexity_standardized} extends eventual convexity to skewed distributions $(\gamma \neq 0)$. This provides a unified guarantee for a broader range of practical applications involving asymmetric uncertainties, such as the Variance Gamma or skewed Student's t models. Note that we rely on the separable structure here. More precisely, using Proposition \ref{1-dimensional generalized hyperbolic distributions} for $\xi \sim GH_{N}(\lambda, \chi, \psi, \mu, \Sigma, \gamma)$, the standardization of $\xi^{\top} x$ yields $\zeta(x) \sim GH_{1}(\lambda, \chi, \psi, 0, 1, \frac{x^{\top}\gamma}{\sqrt{x^{\top}\Sigma x}})$, which incorporates a decision-dependent skewness parameter $x^{\top}\gamma/\sqrt{x^{\top}\Sigma x}$.
\end{remark}



\begin{theorem}
\label{prop:convex_optimization_regime}
Under the assumptions of Theorem~\ref{thm:eventual_convexity_standardized}, define $\Psi(x):=\sum_{i=1}^m \log F_{\xi_i}(g_i(x))
$, $p^{*}:=\max_{1\le i\le m}F_{\xi_i}(t_i^{*})$ and $\mathcal D_p:=\{x\in X:\Psi(x)\ge \log p\}$. For every confidence level \(p\in(p^{*},1)\), the CCO problem \eqref{CCP} is equivalent to
\begin{equation}\label{CCP-rewritten}
    \min_{x\in X} c^\top x
    \quad \text{subject to} \quad
    \Psi(x)\ge \log p.
\end{equation}
Moreover, $\Psi(x)$ is concave on $\mathcal{D}_p$ and \eqref{CCP-rewritten} is a convex optimization problem. 
If \(g_i\) is continuously differentiable for each \(i=1,\dots,m\), then for every \(x\) such that
\(F_{\xi_i}(g_i(x))>0\) for all \(i\),
$\nabla \Psi(x)
=
\sum_{i=1}^m
\frac{f_{\xi_i}(g_i(x))}{F_{\xi_i}(g_i(x))}
\,\nabla g_i(x).
$
Moreover, if there exists a point \(x^{\circ}\in \operatorname{ri}(X)\) such that
\(\Psi(x^{\circ})>\log p\), then a point \(x^{*}\in X\) is optimal if and only if there exists a multiplier \(\lambda^{*}\ge 0\) such that
\[
\begin{cases}
0 \in c - \lambda^{*} \nabla \Psi(x^{*}) + N_X(x^{*}),\\[1mm]
\Psi(x^{*})\ge \log p,\\[1mm]
\lambda^{*} \bigl(\Psi(x^{*})-\log p\bigr)=0,
\end{cases}
\]
where \(N_X(x^{*})\) denotes the normal cone of \(X\) at \(x^{*}\), and
\(\operatorname{ri}(X)\) denotes the relative interior of \(X\). Furthermore, define $v(\tau):=\inf\{c^\top x:\ x\in X,\ \Psi(x)\ge \tau\}$. If \(v\) is differentiable at \(\tau\), then $v'(\tau)=\lambda^{*}(\tau)\ge 0$, where \(\lambda^{*}(\tau)\) is the optimal Lagrange multiplier associated with the constraint \(\Psi(x)\ge \tau\).
For the optimal value as a function of the probability level \(p\), define $\widetilde v(p):=v(\log p)$. Then, whenever \(v\) is differentiable at \(\tau=\log p\), $\frac{d\widetilde v}{dp}(p)=\frac{\lambda^{*}(\log p)}{p}\ge 0$.
\end{theorem}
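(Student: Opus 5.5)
The plan has four parts: the equivalence, concavity and convexity, the gradient and KKT conditions, and the sensitivity of the value function.

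\textbf{Equivalence.} Independence of the components gives $\mathbb{P}(\xi\le g(x))=\prod_{i=1}^m F_{\xi_i}(g_i(x))$. Since $p>p^*\ge 0$, feasibility forces every factor to be positive. Taking logarithms is strictly monotone on $(0,\infty)$, so the constraint $\prod_i F_{\xi_i}(g_i(x))\ge p$ holds if and only if $\Psi(x)\ge\log p$. Hence the feasible set of \eqref{CCP} equals $\mathcal D_p=M(p)$, and the two problems have the same objective over the same set.

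\textbf{Concavity of $\Psi$ and convexity of the problem.} Convexity of $\mathcal D_p$ for $p>p^*$ is exactly Theorem~\ref{thm:eventual_convexity_standardized}, so \eqref{CCP-rewritten} is a linear objective over a convex set. For concavity of $\Psi$ on $\mathcal D_p$ I would re-enter the argument of Henrion--Strugarek (Theorem 3.1 in \cite{henrion2008convexity}). On $\mathcal D_p$ every factor satisfies $F_{\xi_i}(g_i(x))\ge p>F_{\xi_i}(t_i^*)$, hence $g_i(x)>t_i^*$. On $(t_i^*,\infty)$ the map $s\mapsto F_{\xi_i}(s^{-1/r_i})$ is concave, which follows from the $(r_i+1)$-decreasing density. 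Composing it with the convex function $g_i^{-r_i}$ shows that $x\mapsto F_{\xi_i}(g_i(x))$ is concave on $\mathcal D_p$; one needs $F_{\xi_i}$ nondecreasing and the chosen domain to be convex, which it is. The logarithm of a positive concave function is concave, and sums of concave functions are concave, so $\Psi$ is concave on $\mathcal D_p$. \emph{This is the main obstacle.} The concavity of each factor holds only on the region $\{g_i>t_i^*\}$, and one must check that $\mathcal D_p$ sits inside that region and is convex, so that concavity statements along segments make sense. I would handle this by restricting every segment to $\mathcal D_p$, which is legitimate because $\mathcal D_p$ is convex.

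\textbf{Gradient and KKT conditions.} The gradient formula is the chain rule, using $F'_{\xi_i}=f_{\xi_i}$ (the density is continuous) and $F_{\xi_i}(g_i(x))>0$. For optimality, $x^\circ$ is a Slater point: $\Psi(x^\circ)>\log p$ with $x^\circ\in\operatorname{ri}(X)$. I would then apply the standard convex KKT theorem to the concave constraint, with $N_X$ representing the abstract constraint $x\in X$, giving necessity and sufficiency. The one technical point is that $\Psi$ is concave only on $\mathcal D_p$, not on all of $X$. I would work with the convex set $X\cap\mathcal D_{p'}$ for some $p'\in(p^*,p)$, on which $\Psi$ is concave. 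The active constraint lies in its interior relative to $X$, so the normal cones coincide near $x^*$ and the KKT system is unchanged.

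\textbf{Sensitivity.} The value function $v$ is nondecreasing in $\tau$, because raising $\tau$ shrinks the feasible set. By Lagrangian duality under Slater's condition, $-\lambda^*(\tau)$ is a subgradient of the convex perturbation function $u\mapsto v(\tau-u)$ at $u=0$. Wherever $v$ is differentiable this gives $v'(\tau)=\lambda^*(\tau)\ge 0$. Finally, since $\widetilde v(p)=v(\log p)$, the chain rule yields $\widetilde v'(p)=\lambda^*(\log p)/p\ge 0$.
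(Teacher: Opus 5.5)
Your proposal is correct and follows the paper's own route: the log-transform of the product constraint, concavity of $\Psi$ on $\mathcal D_p$ taken from the proof of Theorem 3.1 in Henrion--Strugarek, the chain rule for $\nabla\Psi$, and Slater-based convex duality for the KKT system and the sensitivity formula. Your passage to $\mathcal D_{p'}$ with $p'\in(p^*,p)$, using local coincidence of the normal cones, usefully fills in a detail the paper leaves implicit: $\Psi$ is concave only on $\mathcal D_p$, not on all of $X$. One small slip is that $s\mapsto F_{\xi_i}(s^{-1/r_i})$ is concave on $(0,(t_i^*)^{-r_i})$, i.e.\ for $s^{-1/r_i}>t_i^*$, not on $(t_i^*,\infty)$.
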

\begin{proof}
    Since $p \in (p^{*}, 1)$, the product constraint is equivalent to $\Psi(x) \ge \log p$, yielding \eqref{CCP-rewritten}. Under the assumptions of Theorem \ref{thm:eventual_convexity_standardized}, the proof of Theorem 3.1 in \cite{henrion2008convexity} implies $\Psi(x)$ is concave on $\mathcal{D}_p$, making \eqref{CCP-rewritten} a convex optimization problem. The gradient $\nabla \Psi(x)$ follows directly from the chain rule. Finally, since \eqref{CCP-rewritten} is a standard convex program, the necessary and sufficient KKT conditions and the sensitivity result $\frac{d\tilde{v}}{dp}(p) = \frac{\lambda^{*}(\log p)}{p} \ge 0$ follow directly from classical duality and envelope theorems under Slater's condition.
\end{proof}

\begin{remark}
The previous result shows that the GH chance constraint is not only eventually convex for sufficiently large confidence levels, but also numerically tractable in the sense that its objective function and first-order information can be evaluated explicitly from the GH density and distribution function. In particular, the difficult part of the model is confined to the numerical evaluation of $F_{\xi_i}$ and the thresholds $t_i^{*}$, while the optimization step can be carried out by standard convex optimization machinery whenever $p>p^{*}$.
\end{remark}

\section{Numerical experiments}\label{section about numerical experiments}

This section presents numerical experiments to illustrate the two main theoretical contributions of this paper: the computation of the threshold $t^*(\alpha)$ associated with $\alpha$-decreasing densities, and the eventual convexity of the feasible set in the GH chance-constrained setting.

All numerical experiments were implemented in Python 3.12.8 using NumPy 2.2.4 and SciPy 1.15.2. The computations were executed on a Windows 11 machine equipped with an 8-core CPU, running on default threads, and 15.6 GB of RAM. For the optimization steps, we utilized the sequential least squares programming (SLSQP) solver. The stopping and feasibility tolerances were both set to $10^{-9}$, with a maximum of 1000 iterations. Analytical gradients were provided to the solver, and no warm starts were used.

We first consider a one-dimensional GH distribution $\xi \sim GH_1(\lambda,\chi,\psi,0,1,\varphi)$ with $(\lambda,\chi,\psi,\varphi)=(0.8,\,1.5,\,1.2,\,-0.6)$. Since $\psi>0$, the distribution falls under Case 1 of Theorem \ref{alpha-decreasing of density function}, and the threshold $t^*(\alpha)$ is well defined for every $\alpha\in\mathbb{R}$. To compute $t^*(\alpha)$, we evaluate the function
$D(t;\alpha):=t f_\xi'(t)+\alpha f_\xi(t)$, where $f_{\xi}$ is given by \eqref{non-integral form _ density of GH_d random vector with W density form} and $f_\xi'(t)$ is computed via central finite differences. 
We identify the final point where $D(\cdot;\alpha)$ switches sign, going from positive to negative.
Then, the threshold $t^*(\alpha)$ is refined by a Brent's method on the corresponding bracketing interval. Figure~\ref{fig:tstar-gh} shows: (a) the normalized density of $\xi$, (b) the functions $D(\cdot;\alpha)$ with different values of $\alpha$, and (c) the dependence of $t^*(\alpha)$ on $\alpha$. Table \ref{tab:t_star_computation} summarizes the exact thresholds $t^*(\alpha)$, bracketing intervals, and CPU times for all tested $\alpha$. The numerical results confirm the eventual $\alpha$-decreasing property. For $\alpha \le 0$, the function $D(\cdot; \alpha)$ is already negative on $[10^{-2}, 10^3]$, yielding thresholds below $10^{-2}$. For $\alpha > 0$, the exact thresholds are computed efficiently within milliseconds.

\begin{table}[htbp]
    \centering
    \caption{Computation results of $t^*(\alpha)$ for various values of $\alpha$. For $\alpha \le 0$, the condition is satisfied at the initial grid point $t=0.01$, requiring no bisection (denoted by '---').}
    \label{tab:t_star_computation}
    \resizebox{\textwidth}{!}{%
    \begin{tabular}{lcccccccc}
        \toprule
        \textbf{$\alpha$} & \textbf{$-3.0$} & \textbf{$-1.5$} & \textbf{$0.0$} & \textbf{$0.5$} & \textbf{$1.0$} & \textbf{$2.0$} & \textbf{$3.0$} & \textbf{$5.0$} \\
        \midrule
        $t^*(\alpha)$ & $<0.01$ & $<0.01$ & $<0.01$ & $0.4608$ & $0.7580$ & $1.2730$ & $1.7700$ & $2.7806$ \\
        Bracket & --- & --- & --- & $[0.461, 0.462]$ & $[0.758, 0.759]$ & $[1.272, 1.275]$ & $[1.768, 1.772]$ & $[2.776, 2.783]$ \\
        CPU Time (ms) & $214.7$ & $296.0$ & $167.1$ & $167.7$ & $190.5$ & $239.3$ & $196.4$ & $184.0$ \\
        \bottomrule
    \end{tabular}%
    }
\end{table}

\begin{figure*}[t]
\centering
\includegraphics[width=0.89\textwidth]{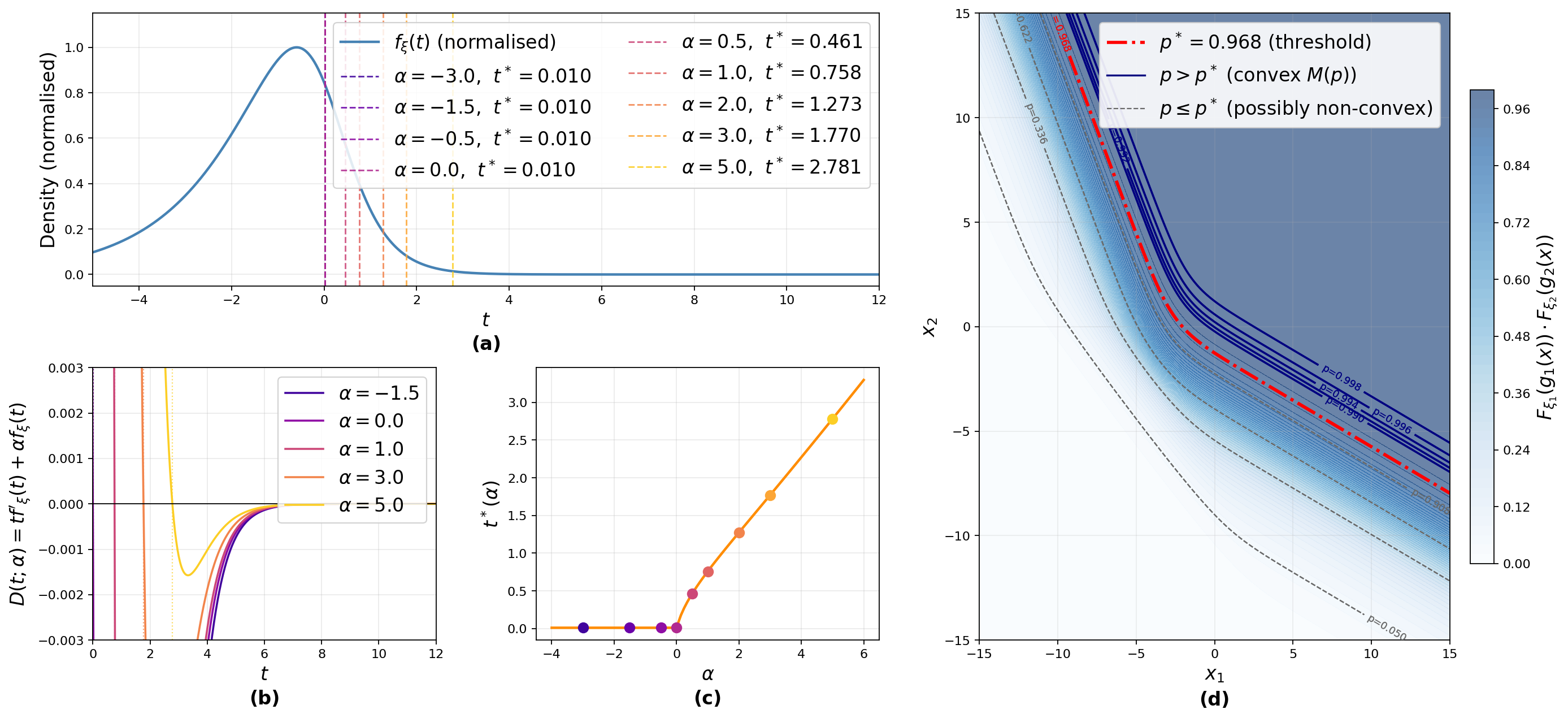}
\caption{(a) GH density; (b) $D(t;\alpha):=t f_\xi'(t)+\alpha f_\xi(t)$; (c) the threshold $t^*(\alpha)$ of the $\alpha$-decreasing property for GH distributions, where $\xi \sim GH_1(0.8,1.5,1.2,0,1,-0.6)$. (d) The critical level $p^*$ of the feasible set $M(p)$ for the two-dimensional GH chance constraint, where $\xi = (\xi_{1}, \xi_{2})^{\top}$ with $\xi_1 \sim GH_1(0.8,1.5,1.2,0,1,-0.6)$ and $\xi_2 \sim GH_1(1.2,1.0,0.8,0,1,-0.4)$.}
\label{fig:tstar-gh}
\end{figure*}

We next illustrate the eventual convexity result for the joint chance-constrained feasible set
$M(p)=\left\{x\in X:\prod_{i=1}^2 \mathbb{P}\bigl(\xi_i\le g_i(x)\bigr)\ge p\right\}$. 
We consider two independent GH random variables $\xi_1 \sim GH_1(0.8,1.5,1.2,0,1,-0.6)$, $\xi_2 \sim GH_1(1.2,1.0,0.8,0,1,-0.4)$, and let $r_1=r_2=1$, $g_1(x_1,x_2)=0.8x_1+0.4x_2+3.5$, $g_2(x_1,x_2)=0.4x_1+0.9x_2+3.0$. Since $g_1$ and $g_2$ are linear, they are $(-r_i)$-concave for any $r_i>0$. Thus, Theorem~\ref{thm:eventual_convexity_standardized} applies with $\alpha_i=r_i+1=2$. The corresponding thresholds are $t_1^*=t^*_{\,\xi_1}(2)\approx 1.2730$, $t_2^*=t^*_{\,\xi_2}(2)\approx 1.7008$. Using numerical integration of the GH densities, we obtain $F_{\xi_1}(t_1^*)\approx 0.9681$, $F_{\xi_2}(t_2^*)\approx 0.9626$, $p^*=\max\{F_{\xi_1}(t_1^*),F_{\xi_2}(t_2^*)\}\approx 0.9681$. Thus, the feasible set $M(p)$ is guaranteed to be convex for every $p>0.9681$. Figure~\ref{fig:tstar-gh} (d) displays the contour structure of $h(x)=F_{\xi_1}\bigl(g_1(x)\bigr)\,F_{\xi_2}\bigl(g_2(x)\bigr)$, which shows the eventual convexity property of the feasible sets.


From a practical perspective, computing the exact threshold $p^*$ provides significant computational advantages for solving optimization problems. CCO problems are generally non-convex and computationally intractable. However, by verifying $p \ge p^*$ a priori, practitioners are mathematically guaranteed that the feasible set $M(p)$ is convex. As a result, the associated CCO problem can be solved globally using standard convex optimization algorithms, without resorting to expensive global heuristics or facing the risk of convergence to local minima.

Based on the same GH specifications as the above example and the same critical level \(p^*=0.9681\), we next consider the nonlinear GH chance-constrained optimization problem
\begin{equation}\label{example-2}
    \min_{x\in X} \ c^\top x
    \quad \text{s.t.} \quad
    \prod_{i=1}^2 \mathbb{P}\bigl(\xi_i \le g_i(x)\bigr)\ge p,
\end{equation}
where \(c=(1,1)^\top\), \(X=[-2,2]^2\), and $g_1(x_1,x_2)=(x_1^2+x_2^2+0.1)^{-1}$, $g_2(x_1,x_2)=((x_1+x_2)^2+0.1)^{-1}$. Both \(g_1\) and \(g_2\) are \((-1)\)-concave on \(X\), so the eventual convexity result applies with the same \(\alpha_i=2\) as in the previous example. We test the confidence levels $
p\in\{0.9731,\ 0.9781,\ 0.9831,\ 0.9881,\ 0.9931\}$, all of which satisfy \(p>p^*\). The problem is solved numerically by the \textsc{SLSQP} method. 
\begin{figure*}[t]
\centering
\includegraphics[width=1.0\textwidth]{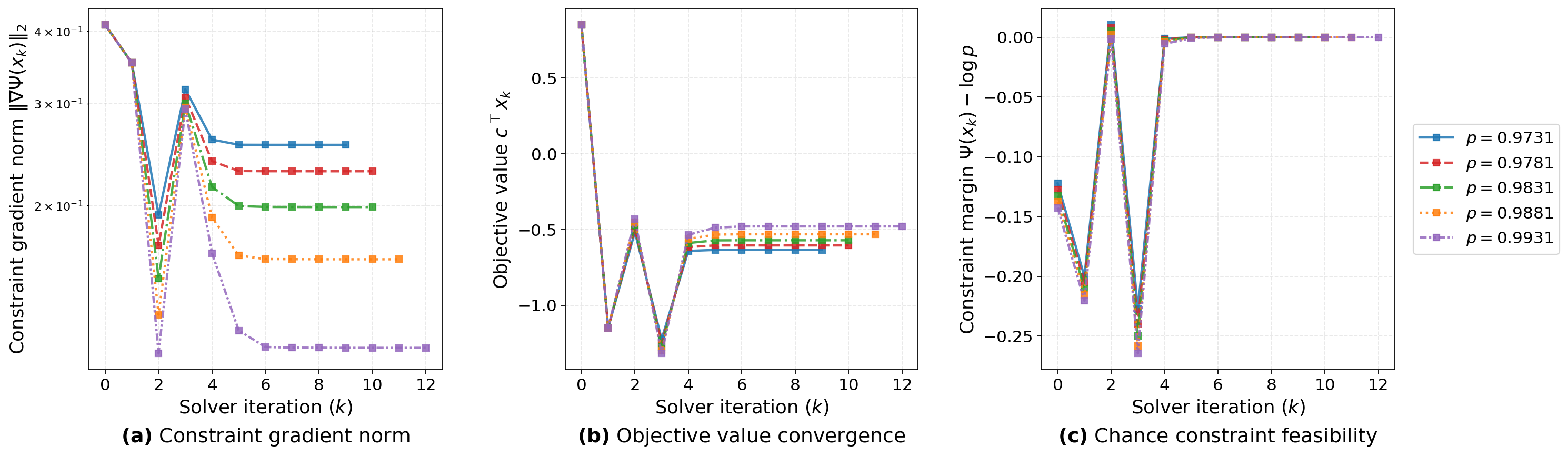}
\caption{Iteration-wise convergence results for the GH chance-constrained optimization problem \eqref{example-2} under different confidence levels \(p\), where \(\Psi(x):=\sum_{i=1}^m \log F_{\xi_i}(g_i(x))\).}
\label{fig:gradient_convergence}
\end{figure*}
Figure~\ref{fig:gradient_convergence} reports the iteration-wise behavior of the \textsc{SLSQP} solver under these confidence levels. In particular, the constraint-gradient norm \(\|\nabla \Psi(x_k)\|_2\), the objective value \(c^\top x_k\), and the feasibility margin \(\Psi(x_k)-\log p\), where
$\Psi(x):=\sum_{i=1}^m \log F_{\xi_i}(g_i(x))$, all stabilize after a small number of iterations, indicating fast convergence in the certified convex range. 
\begin{table}[htbp]
\centering
\caption{Solver results for the GH chance-constrained optimization problem \eqref{example-2} under different confidence levels \(p\). The critical threshold for eventual convexity is \(p^{*} = 0.9681\). All reported instances successfully converged to a feasible solution.}
\label{tab:optimization_results_nonlinear}
\begin{tabular}{cccccc}
\toprule
\(p\) & \(p>p^{*}\) & \(x^{*}=(x_1^{*},x_2^{*})^\top\) & Obj.\ value & Iterations & CPU time (ms) \\
\midrule
0.9731 & \checkmark & \((-0.3172, -0.3172)\) & \(-0.63447\) & 9  & 78.18  \\
0.9781 & \checkmark & \((-0.3023, -0.3023)\) & \(-0.60452\) & 10 & 77.26  \\
0.9831 & \checkmark & \((-0.2854, -0.2854)\) & \(-0.57085\) & 10 & 82.89  \\
0.9881 & \checkmark & \((-0.2655, -0.2655)\) & \(-0.53104\) & 11 & 138.02 \\
0.9931 & \checkmark & \((-0.2395, -0.2395)\) & \(-0.47906\) & 12 & 107.04 \\
\bottomrule
\end{tabular}
\end{table}


\begin{table}[ht]
  \centering
  \caption{Multi-start stability and computational time tracking for the GH chance-constrained optimization problem ($p=0.9881$, \(p^{*} = 0.9681\)). The solver is initialized from five different starting points $x_0$.}
  \label{tab:multi_start_results}
  \resizebox{\textwidth}{!}{%
  \begin{tabular}{ccccc}
    \toprule
    Initial point $x_0$ & Final solution $x^{*}$ & Obj. value & Iterations & CPU time (ms) \\
    \midrule
    $(1.3399, 0.3862)$ & $(-0.2655, -0.2655)$ & -0.53104 & 12 & 121.36 \\
    $(1.7741, -0.5623)$ & $(-0.2655, -0.2655)$ & -0.53104 & 14 & 114.76  \\
    $(-1.3600, 0.3438)$ & $(-0.2655, -0.2655)$ & -0.53104 & 12 & 92.34 \\
    $(-0.7493, -1.1027)$ & $(-0.2655, -0.2655)$ & -0.53104 & 11 & 89.85 \\
    $(1.3543, 0.7686)$ & $(-0.2655, -0.2655)$ & -0.53104 & 11 & 94.94 \\
    \midrule
    \multicolumn{2}{l}{\textbf{Objective value mean $\pm$ std:}} & \multicolumn{3}{l}{$-0.53104 \pm 0.00000$} \\
    \multicolumn{2}{l}{\textbf{CPU time mean $\pm$ std:}} & \multicolumn{3}{l}{$102.65 \pm 14.37$ ms} \\
    \bottomrule
  \end{tabular}%
  }
\end{table}

Table \ref{tab:optimization_results_nonlinear} records the corresponding solution, objective value, iteration count, and CPU time. The iteration count remains small, between 9 and 12, and the CPU time stays within the millisecond scale. To provide further supporting evidence and account for the millisecond-scale execution times, we performed multi-start runs from five different initial points for the case $p=0.9881$. As reported in Table \ref{tab:multi_start_results}, all five runs converged to the same objective value ($-0.53104 \pm 0.00000$). The computational times across these repeated runs yielded a mean of $102.65$ ms with a standard deviation of $14.37$ ms. This two-dimensional example serves as a clear illustration of the computational tractability.

\section{Conclusion}
In this paper, we prove the convexity of chance constraints with skewed distributions. Specifically, relying on the framework of separable chance constraints, we first prove that the GH density is $\alpha$-decreasing. Then, we establish the eventual convexity with some skewed distributions. Finally, we provide numerical experiments to show the practical computability of the $\alpha$-decreasing property threshold $t^*(\alpha)$, the eventual convexity of the feasible sets, and the tractability of solving the optimization problems in the GH setting.



\section*{Acknowledgements}

This research was supported by French government under the France 2030 program, reference ANR-11-IDEX-0003 within the OI H-Code. The second co-author was supported by the French National Research Agency ANR PRCI Project CyberRO (Ref: ANR-25-CE10-5230)/RGC Joint Research Scheme (Ref: A-HKU702/25).











\bibliographystyle{plain}

\bibliography{reference}



\end{document}